\documentclass{amsart}
\usepackage[foot]{amsaddr}
\usepackage{amssymb,latexsym,amsmath,amsfonts,graphicx,xcolor,pb-diagram}
\usepackage[mathscr]{eucal}
\usepackage{tikz, tikz-cd}
\usepackage{relsize}
\usepackage{slashed}

\setcounter{tocdepth}{1}

\newcommand{\mpar}{\par \medskip \par }
\newcommand{\Div}{\operatorname{div}}
\newcommand{\Image}{\operatorname{im}}
\newcommand{\Ker}{\operatorname{ker}}
\newcommand{\Curl}{\operatorname{curl}}
\newcommand{\Spt}{\operatorname{spt}}

\def\DirProd{\operatornamewithlimits{%
  \mathchoice{\vcenter{\hbox{\LARGE $\amalg$}}}
             {\vcenter{\hbox{\Large $\amalg$}}}
             {\mathrm{\amalg}}
             {\mathrm{\amalg}}}}
\def\DirSum{\operatornamewithlimits{%
  \mathchoice{\vcenter{\hbox{\huge $\oplus$}}}
             {\vcenter{\hbox{\Large $\oplus$}}}
             {\mathrm{\oplus}}
             {\mathrm{\oplus}}}}

\numberwithin{equation}{section}

\swapnumbers

\setlength{\parindent}{0pt}

\theoremstyle{definition}

\newtheorem{theorem}{Theorem}[section]
\newtheorem{lemma}[theorem]{Lemma}
\newtheorem{corollary}[theorem]{Corollary}
\newtheorem{proposition}[theorem]{Proposition}

\newtheorem{definition}[theorem]{Definition}

\newtheorem{remark}[theorem]{Remark}
\newtheorem{example}[theorem]{Example}

\makeindex

\begin{document}

\title{Helmholtz-Hodge Decomposition on Graphs}

\author{Peter March}

\begin{abstract}
We propose a definition of the curl of a vector field $X$ on a finite simple graph as the projection of $X$ onto the orthogonal complement of circulation-free vector fields, where a vector field is circulation-free provided its line integral around every simple circuit vanishes. We justify the definition by observing that $X$ and $\Curl X$ have the same circulation and $\Curl\circ\nabla = \Div\circ\Curl =0$. This shows the gradient, curl, and divergence operators form an exact sequence, in analogy with the classical case of vector fields on domains in $\mathbb{R}^3\negthinspace\negthinspace,$ and yields the Helmholtz-Hodge decomposition of a vector field on $G$ as the sum of a gradient, a curl, and a harmonic field. Along the way, we also prove analogues of the divergence theorem, Green's identities, and Helmholtz's theorem. A consequence of our definition is that the curl is a non-local operator, in sharp contrast to the classical case in $\mathbb{R}^3$ and existing notions of curl on a graph.
\end{abstract}

\address{Department of Mathematics\\
Rutgers University\\
Hill Center - Busch Campus\\
110 Frelinghuysen Road\\
Piscataway, NJ 08854-8019}

\email{march@math.rutgers.edu}

\maketitle

\section{Introduction}
It's part of the mathematical folklore that some ideas from calculus and differential geometry extend rather naturally from the continuous setting of manifolds to the discrete setting of graphs. This observation is pervasive in scientific computing and computer graphics e.g., \cite{A}, \cite{AFW}, \cite{C}, \cite {DDT}, \cite{GP}, \cite{L} where equations on structured graphs are meant to approximate equations on a manifold, and in discrete mathematics e.g., \cite{BCEG}, \cite{F}, \cite{LLY}, \cite{O}, \cite{SSWJ} where the graph is understood to be the primary object of study and exploring its geometry is an end in itself. This work is of the latter kind and focuses on the extent to which familiar notions  of multivariable calculus have analogues on a graph, in the absence of any special assumptions on the graph's structure.

\mpar
In the discrete setting there is a high degree of familiarity and consensus concerning the gradient of a function, the divergence of a vector field, and the Laplacian of a function, being the divergence of the gradient. However, this cannot be said of the other classic operator of multivariable calculus, the curl.  In fact, there isn't even consensus about the definition of a vector field on a graph, as some authors consider it to be a general function of the oriented edges of a graph while others insist it must be an anti-symmetric, or alternating, function of oriented edges. In addition, there are several notions of curl in use by mathematicians in differing circumstances. For example, \cite{L} considers the curl of a vector field to be a certain transformation from alternating functions on edges to alternating functions on triangles, which is quite natural from the perspective of the clique complex of the graph. On the other hand \cite{BCEG} considers the curl of a vector field, thought of as a general function of directed edges, to be the symmetrization of the vector field. In this case the curl is an operator from vector fields to vector fields such that curl of the gradient and divergence of the curl vanish. An early example of curl in a discrete setting is \cite{GH} which considers only planar triangular meshes.

\mpar
An antecedent of our work is the dissertation of Alexander Strang \cite{S} which explicitly considers line integrals of vector fields around cyclic subgraphs, rather than just triangles, and inspired our definition of curl. The two definitions are related in that they are couched in terms of integrals around cycles and lead to decompositions of vector fields. But they are distinct in that they apply to different definitions of vector fields, use different functional forms of the curl operator, and lead to different decompositions. 
\mpar
Our purpose here is to show that the notion of tangent graphs introduced in \cite{M} provides a convenient frame of reference for proposing a geometrically natural definition of curl and showing that every vector field on a graph is uniquely the sum of a gradient, a curl, and a harmonic vector field - an analogue of the Helmholtz-Hodge decomposition of vector fields on domains in $\mathbb{R}^3.$

\mpar
To start, we define the tangent graph, tangent bundle, and vector fields then introduce the  gradient, divergence, and Laplace operators: $\nabla, \Div$ and $\Delta$, respectively. This repeats some of the material presented in \cite{M} but we include it here to keep the article self-contained. 

\mpar
In the next section we provide short, conceptually clear proofs the divergence theorem, Green's identities, and Helmholtz's theorem which show the usefulness and pedagogical value of this approach. 

\mpar
In the final section, we introduce the circulation of a vector field and use it to define the curl operator as the projecton onto the orthogonal complement of the space of circulation-free vector fields. A notable consequence of this definition is that the curl is not a local operator since it is defined in terms of the solution set of a system of linear equations for the coefficients of vector fields on all cycles of $G.$

\mpar
With this definition in hand, it's straightforward to conclude that,
$$
0\xrightarrow{} \mathring{C}(G)\xrightarrow{\nabla}\mathcal{X}(G)\xrightarrow{\Curl}\mathcal{X}(G)\xrightarrow{\Div}\mathring{C}(G)\xrightarrow{}0
$$
is an exact sequence, where $\mathcal{X}(G)$ is the space of vector fields on $G$ and $\mathring{C}(G)$ is the space of functions on $G$ having zero average value. A standard result in linear algebra implies the orthogonal decomposition,
$$
\mathcal{X}(G) = \Image(\nabla)\oplus\Image(\Curl)\oplus\Ker(\nabla\circ\nabla^* + \Curl^*\circ\Curl).
$$
It follows from the definitions that $\nabla^*=\Div$ and that $\Curl$ is a self-adjoint projection, hence the rightmost term above is $\mathcal{H}(G)=\Ker(\nabla\circ\Div +\Curl).$ This is the space of harmonic vector fields, meaning vector fields that are both divergence-free and circulation-free, and yields the Helmholtz-Hodge decomposition. We show that,
\begin{align*}
|\mathcal{H}(G)| & = |V_G|-1,\,\,\text{and,}\\
|\Image(\Curl)| & =2(E_G|-|V_G|+1),
\end{align*}
so that the dimension of the image of the curl operator is twice the cyclomatic number of $G.$

\section{Tangent Graphs}
This section is devoted to background material. Much of this will be familiar except perhaps for the notion of the tangent graph which we use as an organizing principle. While the arguments are elementary we provide detailed proofs for completeness's sake.

\mpar
Let $G=(V_G, E_G)$ be a finite, simple graph having vertex set $V_G$ and edge set $E_G.$ If $\{i,j\}\in E_G$ then the ordered pair $ij=(i,j)$ represents the edge directed from $i$ to $j$ while the ordered pair $ji=(j,i)$ represents the edge directed from $j$ to $i$. The vertex set of the tangent graph $tG$ is the set of all directed edges and the edge set of $tG$ is the set of all pairs of directed edges where the endpoint of one directed edge is the basepoint of another directed edge. We think of the vertices of the tangent graph as tangent directions in $G$ and its edges as pairs of contiguous tangent directions. 

\mpar
The tangent graph turns out to be an oriented version of the line graph of $G$ in which each edge $\{i,j\}$ in $G$ determines an edge $\{ij, ji\}$ in $tG$ and each pair of incident edges $\{i,j\}, \{j,k\},\, k\neq i$ in $G$ determines a pair of edges $\{ij, jk\}, \{ji, kj\}$ in $tG$. As it happens, the tangent graph of $G$ determines the line graph of $G$ but not conversely.  In fact, the tangent graph is a strictly finer invariant than the line graph and the line graph is a minor of the tangent graph. (\cite{M} Proposition 2.12.3 and Remark 2.13.2)

\begin{definition}
Let $G=(V_G, E_G)$ be a finite, simple graph with adjacency matrix $A$. The \textit{tangent graph} $tG=(V_{tG}, E_{tG})$ has vertex set,
$$
V_{tG}=\{(i,j)\in V_G\times V_G\mid A(i,j)=1\}
$$
and edge set,
$$
E_{tG}=\{\{(i,j), (k,l)\}\in V_{tG}\times V_{tG}\mid j=k\,\, \text{or}\,\, i=l\}.
$$
\end{definition}

\begin{example}
Let $G$ be a triangle with an appended edge and let's compare its line graph with its tangent graph. It's straightforward to verify the line graph is two triangles joined along an edge but the structure of the tangent graph is not so obvious. Evidently, $tG$ has eight vertices - two for each of the four edges of $G.$ Every edge of $tG$ has the form $\{ij, ji\}$ or $\{ij, jk\}$ for distinct vertices $i,j,k.$ One can verify the depiction of $tG$ below by inspection. 

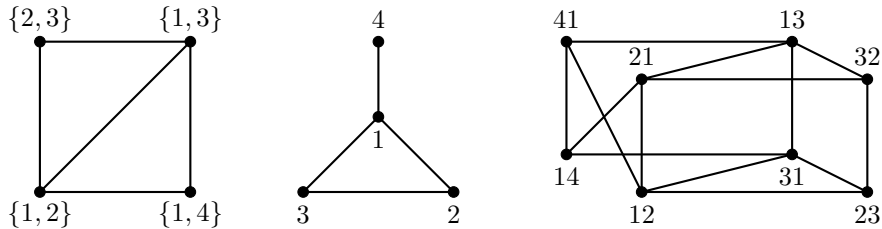
\begin{figure} [h]
\begin{tikzpicture}
\draw[fill=black] (0,0) circle (2pt);
\draw[fill=black] (0,2) circle (2pt);
\draw[fill=black] (2,0) circle (2pt);
\draw[fill=black] (2,2) circle (2pt);

\node at (0,-0.3) {$\{1,2\}$};
\node at (0, 2.3) {$\{2,3\}$};
\node at (2,-0.3) {$\{1,4\}$};
\node at (2,2.3) {$\{1,3\}$};

\draw[thick] (0,0) -- (2,0) -- (2,2) -- (0,0);
\draw[thick] (0,0) -- (0,2);
\draw[thick] (0,2) -- (2,2);

\draw[fill=black] (3.5,0) circle (2pt);
\draw[fill=black] (5.5,0) circle (2pt);
\draw[fill=black] (4.5,1) circle (2pt);
\draw[fill=black] (4.5,2) circle (2pt);

\node at (3.5,-0.3) {3};
\node at (5.5, -0.3) {2};
\node at (4.5, 0.7) {1};
\node at (4.5, 2.3) {4};

\draw[thick] (3.5,0) -- (5.5,0) -- (4.5,1) -- (3.5,0);
\draw[thick] (4.5,1) -- (4.5,2);

\draw[fill=black] (8,0) circle (2pt);
\draw[fill=black] (11,0) circle (2pt);
\draw[fill=black] (10,0.5) circle (2pt);
\draw[fill=black] (8,1.5) circle (2pt);
\draw[fill=black] (11,1.5) circle (2pt);
\draw[fill=black] (10,2) circle (2pt);
\draw[fill=black] (7,0.5) circle (2pt);
\draw[fill=black] (7,2) circle (2pt);

\node at (8,-0.3) {12};
\node at (11,-0.3) {23};
\node at (10,0.2) {31};
\node at (8,1.8) {21};
\node at (11,1.8) {32};
\node at (10,2.3) {13};
\node at (7,0.2) {14};
\node at (7, 2.3) {41};

\draw[thick] (8,0) -- (11,0) -- (10,0.5) -- (8,0) -- (8, 1.5) -- (11, 1.5) --  (10,2) -- (8, 1.5);
\draw[thick] (10, 0.5) -- (10, 2);
\draw[thick] (11, 0) -- (11, 1.5);
\draw[thick] (7, 0.5) -- (7, 2);
\draw[thick] (7, 2) -- (8, 0);
\draw[thick] (7, 2) -- (10, 2);
\draw[thick] (7, 0.5) -- (10, 0.5);
\draw[thick] (7, 0.5) -- (8, 1.5);
\end{tikzpicture}
\caption{A graph $G$ (\textit{center}) flanked by its line graph $lG$ (\textit{left}) and its tangent graph $tG$ (\textit{right}). Observe that $lG$ is obtained from $tG$ by contracting edges of the form $\{ij,ji\}$ and identifying their associated endpoints and incident edges.}
\end{figure}
\end{example}

\begin{proposition}
Let $\sigma\colon V_{tG}\to V_{tG}$ be the \textit{involution} $\sigma((i,j))=(j,i)$. Let $\pi, \pi_+\colon V_{tG}\to V_G$ be the \textit{projections} $\pi((i,j))=i$ and $\pi_+((i,j))=j.$ Then $\pi, \sigma,$ and $\pi_+=\pi\circ\sigma$ extend to graph homomorphisms.
\end{proposition}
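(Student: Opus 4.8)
The plan is to check directly that each of the three vertex maps carries edges of $tG$ to edges of its target graph, handling the two clauses $j=k$ and $i=l$ in the definition of $E_{tG}$ separately. Before that I would record that $\sigma$ is well-defined on $V_{tG}$: since $G$ is simple, $A$ is symmetric, so $(i,j)\in V_{tG}$ forces $(j,i)\in V_{tG}$, and $\sigma$ is manifestly an involution of this set.

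For $\sigma$: given an edge $\{(i,j),(k,l)\}\in E_{tG}$, so that $j=k$ or $i=l$, its image $\{(j,i),(l,k)\}$ lies in $E_{tG}$ precisely when $i=l$ or $j=k$ --- the same hypothesis read in the opposite order --- so $\sigma$ is a graph homomorphism, in fact an automorphism. For $\pi$: the clause $j=k$ sends the edge to $\{i,j\}$, which lies in $E_G$ because $(i,j)\in V_{tG}$ means $A(i,j)=1$; the clause $i=l$ sends it to $\{k,l\}\in E_G$ because $A(k,l)=1$. Since $A$ has zero diagonal these are genuine loopless edges (and in fact the two image endpoints are always distinct, since $i=k$ would force a loop in $G$ under either clause). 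Finally $\pi_+=\pi\circ\sigma$, so it is a graph homomorphism because composites of graph homomorphisms are graph homomorphisms; alternatively, one reruns the two-case check using second coordinates instead.

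I do not anticipate a genuine obstacle here. The only point deserving care is the bookkeeping convention that, for simple graphs, a graph homomorphism must send adjacent vertices to \emph{distinct} (hence adjacent) vertices, which is why I would make the no-loops remark explicit rather than leave it silent. The one conceptual takeaway --- and the real reason the argument works --- is that the adjacency relation of $tG$ was built to be symmetric under reversal of tangent directions, so that the clauses $j=k$ and $i=l$ are interchanged by $\sigma$, and each of them, by itself, encodes an incidence of $G$.
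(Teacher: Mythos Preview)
Your proposal is correct and follows essentially the same approach as the paper: a direct two-case check of the edge condition for $\pi$ and for $\sigma$, together with the observation that $\pi_+=\pi\circ\sigma$ is a composite of homomorphisms. The only cosmetic differences are that you treat $\sigma$ before $\pi$ and add the explicit remark that $\sigma$ is well defined on $V_{tG}$ by symmetry of $A$; the no-loops observation you make is also noted in the paper's argument.
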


\begin{proof}
A function $\phi\colon V_H\to V_K$ from the vertices of a graph $H$ to the vertices of a graph $K$ defines a homomorphism provided $\{i,j\}\in E_H$ implies $\{\phi(i), \phi(j)\}\in E_K.$ 

\mpar
To show $\pi$ is a homomorphism let $\{(i,j), (k,l)\}$ be an edge of $tG$ and suppose, say, that $j=k$. Then $\{\pi((i,j)), \pi((k,l))\} = \{i,k\} = \{i,j\}\in E_G.$ Alternatively, suppose $i=l$. Then $\{\pi((i,j)), \pi((k,l))\} = \{i,k\} = \{l,k\}\in E_G,$ and it follows that $\pi$ is a homomorphism. Note that this argument shows that the definition of edges in $tG$ precludes the possibility that $i=k,$ which is consistent with the fact that a simple graph has no self loops.

\mpar
We show $\sigma$ is a homomorphism by a similar argument. Suppose $\{(i,j), (k,l)\}\in V_{tG}$ and $j=k.$ Now $\sigma((i,j))=(j,i),$ and $ \sigma((k,l))=(l,k)$ and it follows that $\{(j,i), (l,k)\}\in E_{tG}$ because $j=k$. Suppose, on the other hand, that $i=l$. Then it follows from the definition that $\{(j,i), (l,k)\}\in E_{tG}.$ Since $\pi_+ = \sigma\circ\pi$, it is also a homorphism.
\end{proof}

\begin{remark}
It's useful to have notation for a vertex of the tangent graph that does not make explicit reference to the underlying edge or vertices. Specifically, if $u\in V_{tG}, \pi(u)=i,$ and $\pi_+(u)=j$ then $\{i, j\}\in E_G$ and we write $u=ij=(i,j).$ In this notation, $\pi(u)$ is the \textit{base point} of $u$ and $\pi_+(u)$ is the \textit{end point}. The vertex set and edge set of $tG$ are,
\begin{align*}
V_{tG}&=\{u\in V_G\times V_G\mid A(\pi(u), \pi_+(u))=1\},\\
E_{tG}& =\{\{u,v\}\mid u,v\in V_{tG},\, \pi_+(u)=\pi(v)\,\,\text{or}\,\,\pi_+(u)=\pi(v)\}.
\end{align*}
In the sequel, we almost always denote a vertex of the tangent graph generically by a letter like $u$ or as a concatenation of adjacent vertices like $ij$ but almost never by an ordered pair $(i,j).$
\end{remark} 

\begin{definition}
1. Let $C(G)$ be the space of real valued functions defined on $V_G$ and let $\mathring{C}(G)$ be the subspace of functions such that $\sum_{i\in V_G}\phi(i) = 0.$

\mpar
2. The functions $e_i, i\in V_G$, where,
$$
e_i(j)= \begin{cases}
1, & \text{$i=j,$}\\
0, & \text{$i\neq j,$}
\end{cases}
$$
form a basis of $C(G)$. Similarly, the functions, $e_u, u\in V_{tG}$, where,
$$
e_u(v)= \begin{cases}
1, & \text{$u=v,$}\\
0, & \text{$u\neq v,$}
\end{cases}
$$
form a basis of $C(tG).$ These spaces have natural inner products given by the rule $\langle e_i, e_j\rangle_{C(G)}=e_i(j)$ and $\langle e_u, e_v\rangle_{C(tG)} = e_u(v).$

\mpar
3. The \textit{tangent space} to $G$ at $i\in V_G$ is the vector space $T_i(G)=\langle e_u\mid \pi(u)=i\rangle $ where the angle brackets denote the space spanned by the indicated functions. Every $X_i\in T_i(G)$ has the form $X_i=\sum_{\pi(u)=i}X_i(u) e_u$ for some real numbers $X_i(u)$ called the \textit{coefficients} of $X_i$. $T_i(G)$ inherits an inner product from $C(tG)$ by the rule,
$$
\langle X_i, Y_i\rangle_{T_i(G)} =\sum_{\pi(u)=i}\sum_{\pi(v)=i}X_i(u)Y_i(v)\langle e_u, e_v\rangle_{C(tG)} =\sum_{\pi(u)=i}X_i(u)Y_i(u).
$$

\mpar
4. The \textit{tangent bundle} $T(G)$ of $G$ is the coproduct $T(G)=\DirProd_{i\in V_G}T_i(G).$ It inherits an adjacency relation from $G$ by saying $X_i$ and $X_j$ are adjacent in $T(G)$ provided $i$ and $j$ are adjacent in $G$. (Strictly speaking, we are conflating a vector in a tangent space with its canonical injection into the coproduct; a useful ambiguity which is unlikely to cause confusion).

\mpar
5. A \textit{vector field} $X$ on $G$ is a section of the tangent bundle; that is, a function $X\colon V_G\to T(G)$ where $X(i)\in T_i(G)$ for all $i\in V_G.$
Generally speaking, we write $X(i)=X_i$ for the vector values of the section and $X(u)=X_{\pi(u)}(u)$ for the cofficients of the vector field. Specifically, we always think of the coefficients of $X$ as a function in $C(tG)$ and observe that every such function is the set of coefficients of some vector field. Thus, we variously write,
$$
X=\sum_{u\in V_{tG}}X(u)e_u =\sum_{i\in V_G}\sum_{\pi(u)=i} X_i(u)e_u = \sum_{i\in V_G} X_i,
$$
keeping in mind our convention of conflating a tangent vector at a vertex with its canonical injection into the tangent bundle. 

\mpar
6. The space of all vector fields is denoted $\mathcal{X}(G).$ It follows from the definitions that $|\mathcal{X}(G)|=2|E_G|$ and $\mathcal{X}(G)\cong\DirSum_{i\in V_G}T_i(G)\cong C(tG).$
Note that $\mathcal{X}(G)$ inherits an inner product from $C(tG)$ by the rule,
\begin{align*}
\langle X,Y\rangle_{\mathcal{X}(G)} & = \sum_{u\in V_{tG}}\sum_{v\in V_{tG}}X(u)Y(v)\langle e_u, e_v\rangle_{C(tG)}\\
& = \sum_{u\in V_{tG}}X(u)Y(u) = \sum_{i\in V_G}\langle X_i, Y_i\rangle_{T_i(G)}.
\end{align*}

If $X,Y\in \mathcal{X}(G)$ we often write $\langle X_i, Y_i\rangle_{T_i(G)} =X_i\cdot Y_i$ so that, for example, 
$$
\langle X,Y\rangle_{\mathcal{X}(G)} = \sum_{i\in V_G} X_i\cdot Y_i.
$$
\end{definition}

The following proposition records the fact that the involution $\sigma\colon V_{tG}\to V_{tG}$ extends to an involution on $\mathcal{X}(G)$, leading to a $\mathbb{Z}_2$-grading of vector fields. This is a familiar fact stated in the framwork of tangent graphs.

\begin{proposition}
Let $\overline{u}=\sigma(u)$ and for every $X\in\mathcal{X}(G)$ let $\overline{X}$ be the vector field with coefficients $\overline{X}(u)=X(\overline{u}).$ We say $X$ is even or \textit{symmetric} provided $\overline{X}=X$ and it is odd or \textit{antisymmetric} provided $\overline{X}=-X.$ 

\mpar
The operators $s, a\colon\mathcal{X}(G)\to\mathcal{X}(G)$ defined by the formulas $sX =\tfrac{1}{2}(X+\overline{X})$ and $aX=\tfrac{1}{2}(X-\overline{X})$ are orthogonal projections satisfying $s+a=1.$ Let $\mathcal{X}^s(G) = \Image(s) = \Ker(a)$ and $\mathcal{X}(G)^a = \Image(a) =\Ker (s)$. Then, 
$$
\mathcal{X}(G)=\mathcal{X}^s(G)\oplus\mathcal{X}^a(G)
$$
is an orthogonal decomposition of vector fields into even and odd parts.
\end{proposition}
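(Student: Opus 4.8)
The plan is to reduce everything to the single observation that the map $X\mapsto\overline{X}$ is a self-adjoint involution of $\mathcal{X}(G)$, after which the statement is just the standard spectral picture of such an operator. First I would note that $X\mapsto\overline{X}$ is linear, which is immediate from the coefficient formula $\overline{X}(u)=X(\overline{u})$ together with the identification $\mathcal{X}(G)\cong C(tG)$. Since $\sigma\colon V_{tG}\to V_{tG}$ satisfies $\sigma^2=\mathrm{id}$ (and is in fact fixed-point-free, because $(i,j)=(j,i)$ forces $i=j$, impossible for an edge, though below only bijectivity is used), one gets $\overline{\overline{X}}(u)=\overline{X}(\overline{u})=X(\overline{\overline{u}})=X(u)$, so, writing $T$ for this map, $T^2=1$.

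The one genuinely computational step is to check that $T$ is orthogonal, equivalently self-adjoint. Using the inner product $\langle X,Y\rangle_{\mathcal{X}(G)}=\sum_{u\in V_{tG}}X(u)Y(u)$ recorded in the preceding Definition and the fact that $\sigma$ is a bijection of $V_{tG}$, I would reindex the sum by $v=\overline{u}$ to obtain $\langle\overline{X},\overline{Y}\rangle=\sum_u X(\overline{u})Y(\overline{u})=\sum_v X(v)Y(v)=\langle X,Y\rangle$; hence $T$ is an isometry, and combined with $T^2=1$ this yields $\langle\overline{X},Y\rangle=\langle\overline{X},\overline{\overline{Y}}\rangle=\langle X,\overline{Y}\rangle$, i.e. $T^*=T$.

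With $T$ self-adjoint and $T^2=1$ in hand the rest is formal. I would compute $s^2=\tfrac14(1+T)^2=\tfrac14(1+2T+1)=\tfrac12(1+T)=s$ and likewise $a^2=a$, so $s$ and $a$ are idempotent; they are self-adjoint because $T$ is; and $s+a=1$ while $sa=as=\tfrac14(1-T^2)=0$. Thus $s$ and $a$ are orthogonal projections. The decomposition then follows by routine bookkeeping: every $X=sX+aX$, so $\mathcal{X}(G)=\Image(s)+\Image(a)$; from $s+a=1$ and $sa=as=0$ one gets $\Image(s)=\Ker(a)$ and $\Image(a)=\Ker(s)$; the two summands are orthogonal since $\langle sX,aY\rangle=\langle X,saY\rangle=0$; and $\Image(s)\cap\Image(a)=\{0\}$ because a vector there equals both $sX$ and $aX=s(aX)=0$. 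Finally $\Image(s)=\{X:sX=X\}=\{X:\overline{X}=X\}=\mathcal{X}^s(G)$ and similarly $\Image(a)=\mathcal{X}^a(G)$, which is the asserted orthogonal decomposition into even and odd parts. I do not anticipate a real obstacle here; the only point needing care is the reindexing that makes $T$ orthogonal, and that is precisely where the simplicity of $\sigma$ and of the chosen inner product are used.
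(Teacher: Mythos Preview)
Your argument is correct and, at bottom, uses the same ingredients as the paper: that $X\mapsto\overline{X}$ squares to the identity and is compatible with the inner product. The organization differs, however. You abstract the situation by naming the involution $T$, prove once that $T$ is a self-adjoint isometry via the reindexing $v=\overline{u}$, and then read off idempotence, self-adjointness, $sa=0$, and orthogonality of the images as purely formal consequences of $T^2=1$ and $T^*=T$. The paper instead verifies $s^2=s$ by a direct coefficient calculation, obtains $a^2=a$ from $a=1-s$, and then establishes orthogonality of the two pieces at the level of the spanning vectors $e_u\pm e_{\overline{u}}$ by checking $\langle e_u+e_{\overline{u}},\,e_v-e_{\overline{v}}\rangle=0$ case by case. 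Your route is a bit more conceptual and makes transparent why the result holds (it is the spectral decomposition of a self-adjoint involution), while the paper's route is more hands-on and exhibits the eigenvectors explicitly; both are short and neither requires anything beyond the definitions.
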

 
\begin{proof}
The calculations showing $s$ and $a$ are orthogonal projections are elementary and familiar but worth repeating. We have,
$$
 s^2X=\tfrac{1}{2} (s(X)+\overline{s(X)}))=\tfrac{1}{4}(X+\overline{X}) + \tfrac{1}{4}(\overline{X}+X) =\tfrac{1}{2}(X+\overline{X}) = sX,
$$
and $a^2=(1-s)^2 = 1-2s +s^2 = 1-s =a.$ To see that the decomposition is orthogonal, observe that,
\begin{align*}
2sX&=\negthickspace\sum_{u\in V_{tG}}\tfrac{1}{2}(X(u)+X(\overline{u}))(e_u+e_{\overline{u}}),\,\,\text{and}\\
2aX&=\negthickspace\sum_{u\in V_{tG}}\tfrac{1}{2}(X(u)-X(\overline{u}))(e_u-e_{\overline{u}}),
\end{align*}
so it's enough to show that, 
$$
\langle e_u+e_{\overline{u}}\,, e_v-e_{\overline{v}}\rangle_{\mathcal{X}(G)}=  e_u(v)-e_u(\overline{v})+e_{\overline{u}}(v) -e_{\overline{u}}(\overline{v})= 0
$$
for all $u,v\in V_{tG}.$ Now, the terms above vanish identically if $u\notin\{v, \overline{v}\}.$ But, by inspection, the sum of the terms vanishes if either $u=v$ or $u=\overline{v}.$
\end{proof} 

\begin{remark}
Some authors insist that a vector field on a graph should be odd or antisymmetric, in analogy with the case of manifolds. We prefer to admit the possibility of even or symmetric vector fields and, more importantly, vector fields of no particular parity.
\end{remark}

The following material is again quite familiar but it is restated in the framework of tangent graphs. It partly overlaps with some introductory material in \cite{M} but is repeated here for the reader's convenience.

\begin{proposition}
1. Let $d\colon C(G)\to C(tG)$ be the operator,
$$
d\phi(u)=\phi(\pi_+(u))-\phi(\pi(u))
$$ 
and $\pi,\pi_+\colon C(tG)\to C(G)$ be the operators,
$$
\pi f(i) =\negthickspace\sum_{\pi(u)=i}f(u)\,\,\text{and}\,\, \pi_+f(i) = \negthickspace\sum_{\pi+(u)=i}f(u).
$$
Then $d$ and $(\pi_+-\pi)$ are adjoint operators in the sense that,
$$
\langle d\phi, f\rangle_{C(tG)} =\langle \phi, \pi_+f -\pi f\rangle_{C(G)}.
$$

\mpar
2. The \textit{gradient} is the operator $\nabla \colon C(G)\to\mathcal{X}(G)$ defined by the formula,
$$
\nabla\phi=\sum_{u\in V_{tG}}d\phi(u)e_u,
$$
and the \textit{divergence} is the operator $\Div\colon\mathcal{X}(G)\to C(G)$ defined by the formula,
$$
\Div X = \sum_{i\in V_G} (\pi_+X(i)-\pi X(i)) e_i = \sum_{i\in V_G}\sum_{\pi(u)=i}(X(\overline{u})-X(u))e_i.
$$
Then $\nabla$ and $\Div$ are adjoint operators in the sense that,
$$
\langle\nabla\phi, X\rangle_{\mathcal{X}(G)}=\langle\phi, \Div X\rangle_{C(G)}.
$$

3. The \textit{Laplacian} is the operator $\Delta=\Div\circ\nabla.$ It is a non-negative, self adjoint operator on $C(G)$ given by the formula,
$$
\Delta\phi(i)=-2\sum_{\pi(u)=i}d\phi(u).$$
\mpar

4. Recall that every linear operator $L\colon C(G)\to C(G)$ has the form,
$$
L\phi(i)=\sum_{j\in V_G}L(i,j)\phi(j).
$$
We say $L$ is a \textit{first order differential operator} provided (1) $L\phi=0$ for every function $\phi$ that is constant on each connected component of $G$ and (2) $L(i,j)=0$ if $d(i,j)\geq 2$ where $d(i,j)$ is the length of the shortest path between $i$ and $j$. 

\mpar
Every vector field $X\in\mathcal{X}(G)$ defines a first order differential operator by the rule,
$$
X\phi(i)=\sum_{\pi(u)=i}X(u)d\phi(u)
$$
and every first order differential operator is of this form for some vector field $X$. In particular, $\Delta$ is the first order differential operator associated to the constant vector field $X = -2$. 

\mpar
5. For every $\nu\in C(G)$ let $m(\nu)$ be the operator $m(\nu)\phi(i) = \nu(i)\phi(i).$ Let $X^*$ be the adjoint of $X$, meaning $\langle X\phi, \psi\rangle_{C(G)} = \langle\phi, X^*\psi\rangle_{C(G)}.$ Then,
$$
X^*= \overline{X} +m(\Div X(i)).
$$ 
Thus, $X$ is self-adjoint if and only if $\overline{X} = X$ and $X$ is skew-adjoint if and only if both $\overline{X} = -X$ and $\Div X = 0.$ 
\end{proposition}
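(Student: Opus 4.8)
The plan is to compute $\langle X\phi,\psi\rangle_{C(G)}$ directly from the defining formula $X\phi(i)=\sum_{\pi(u)=i}X(u)\,d\phi(u)$ and rewrite it in the form $\langle\phi,L\psi\rangle_{C(G)}$ for a suitable operator $L$; since $C(G)$ is finite dimensional, uniqueness of the adjoint then forces $X^*=L$. Expanding the inner product over $V_G$ and interchanging the order of summation gives
$$
\langle X\phi,\psi\rangle_{C(G)}=\sum_{u\in V_{tG}}X(u)\,d\phi(u)\,\psi(\pi(u)),
$$
and substituting $d\phi(u)=\phi(\pi_+(u))-\phi(\pi(u))$ splits this into two sums over $V_{tG}$.

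The key step is to reindex the first of these two sums, $\sum_{u}X(u)\,\phi(\pi_+(u))\,\psi(\pi(u))$, by the involution $u\mapsto\overline u=\sigma(u)$. Since $\pi\circ\sigma=\pi_+$ and $\sigma$ is a bijection of $V_{tG}$, this converts the sum into $\sum_{u}\overline X(u)\,\phi(\pi(u))\,\psi(\pi_+(u))$, which I recognize, after writing $\overline X$ as the first order differential operator it determines via part~4, as $\langle\phi,\overline X\psi\rangle_{C(G)}$. The leftover term $-\sum_u X(u)\,\phi(\pi(u))\,\psi(\pi(u))$ combines with the $\psi(\pi(u))$-contribution hidden inside $\langle\phi,\overline X\psi\rangle_{C(G)}$; grouping by base point $i=\pi(u)$ and invoking $\Div X(i)=\sum_{\pi(u)=i}(X(\overline u)-X(u))=\sum_{\pi(u)=i}(\overline X(u)-X(u))$, the discrepancy collapses to $\sum_{i\in V_G}\phi(i)\psi(i)\,\Div X(i)=\langle\phi,m(\Div X)\psi\rangle_{C(G)}$. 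Hence $\langle X\phi,\psi\rangle_{C(G)}=\langle\phi,(\overline X+m(\Div X))\psi\rangle_{C(G)}$ for all $\phi,\psi$, so $X^*=\overline X+m(\Div X)$.

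For the consequences I would argue as follows. If $\overline X=X$ then $\Div X(i)=\sum_{\pi(u)=i}(X(u)-X(u))=0$, so $X^*=\overline X=X$; conversely, if $X=X^*$ then $X-\overline X=m(\Div X)$ as operators on $C(G)$, and since $X-\overline X$ annihilates functions that are constant on each component (because $d\phi=0$ there), while $m(\Div X)\mathbf 1=\Div X$, evaluating at $\mathbf 1$ yields $\Div X=0$ and hence $\overline X=X$. The skew-adjoint case is the same in spirit: $\overline X=-X$ together with $\Div X=0$ gives $X^*=-X$ immediately from the formula, and from $X^*=-X$ one gets $X+\overline X=-m(\Div X)$, whose left side annihilates locally constant functions, forcing $\Div X=0$ and then $\overline X=-X$. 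The only genuine subtlety, and the point I expect to require the most care, is precisely this last pair of converse arguments: an antisymmetric field need not be divergence-free, so $\Div X=0$ cannot be deduced from $\overline X=-X$ and must instead be extracted from $X^*=-X$ using the first order differential operator property of $X\pm\overline X$; beyond that, the only thing demanding attention is the bookkeeping in the $\sigma$-reindexing of the sums over $V_{tG}$.
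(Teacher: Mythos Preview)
Your proof is correct and follows essentially the same direct computation as the paper; the paper organizes it slightly differently by first recognizing $X^*\psi=\Div(\psi X)$ via the already-established adjointness $\nabla^*=\Div$ from part~2 and then expanding $\Div(\psi X)$ with an add-and-subtract of $\psi(i)X(\overline u)$, which is exactly your $\sigma$-reindexing in disguise. Your treatment of the converse implications (self-adjoint $\Rightarrow\overline X=X$; skew-adjoint $\Rightarrow\overline X=-X$ and $\Div X=0$) via evaluation at constant functions is in fact more careful than the paper's, which simply asserts that these follow from the definitions.
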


\begin{proof}
The proofs are straightforward calculations but we include them here to gain familiarity with the basic ideas and notation. Regarding item 1 we have,
\begin{align*}
\langle d\phi, f\rangle_{C(tG)} & = \sum_{u\in V_{tG}}d\phi(u) f(u)\\
& = \sum_{u\in V_{tG}}\phi(\pi_+(u)) f(u) - \sum_{u\in V_{tG}}\phi(\pi(u))f(u)\\
& = \sum_{i\in V_G}\phi(i)\negthickspace\sum_{\pi_+(u)=i}f(u) - \negthickspace\sum_{i\in V_G}\phi(i)\sum_{\pi(u)=i}f(u) \\
& = \langle\phi,\pi_+ f-\pi f\rangle_{C(G)}.
\end{align*}

The assertion that the divergence and gradient are adjoints of one another is just a restatement of item 1 in the language of vector fields. It follows that,
$$
\langle\Delta\phi, \psi\rangle =\langle\Div\circ\nabla\phi, \psi\rangle =\langle\nabla\phi,\nabla\psi\rangle=\langle \phi, \Div\circ\nabla\phi\rangle=\langle\phi,\Delta\psi\rangle,
$$
hence $\Delta$ is self adjoint. Since,
$$
\langle\Delta\phi, \phi\rangle_{C(G)} = \langle\nabla\phi, \nabla\phi\rangle_{\mathcal{X}(G)} \geq 0,
$$
it is non-negative. We have,
$$
\Delta\phi(i) = \Div\left(\nabla\phi\right) = \sum_{\pi(u)=i}\left(d\phi(\overline{u})- d\phi(u)\right)= -2\sum_{\pi(u)=i}d\phi(u),
$$
since $d\phi(ji)=-d\phi(ij).$ To calculate the adjoint of $X$ observe,
\begin{align*}
\langle X\phi, \psi\rangle_{C(G)} & = \sum_{i\in V_G} \psi(i) X\phi(i)\\
& = \sum_{i\in V_G} \psi(i)\negthickspace\sum_{\pi(u)=i} X(u) d\phi(u)\\
& = \sum_{u\in V_{tG}} \psi(\pi(u))\, X(u) d\phi(u)\\
& = \langle\nabla\phi, \psi X\rangle_{\mathcal{X}(G)}= \langle\phi, \Div(\psi X)\rangle_{C(G)},
\end{align*}
where $\psi X$ is the vector field with coefficients $(\psi X)(u) =  \psi(\pi(u)) X(u).$
Thus,
\begin{align*}
X^*\psi(i) & = \Div (\psi X)(i)\\
& = \sum_{\pi(u)=i}\left[\psi X(\overline{u}) -\psi X(u)\right]\\
& = \sum_{\pi(u)=i}\left[\psi(\pi_+(u)) X(\overline{u}) - \psi(i) X(u) \pm \psi(i) X(\overline{u})\right]\\
& = \sum_{\pi(u)=i} X(\overline{u})d\psi(u) + \psi(i)\negthickspace \sum_{\pi(u)=i}\left[X(\overline{u}) - X(u)\right]\\
& = \overline{X}\psi(i) + \psi(i)\Div X(i).
\end{align*}

It's easy to see that if $\overline{X}=X$ then $\Div X\equiv 0$ so this symmetry condition is necessary and sufficient for self adjointness. The conditions for skew adjointness follow from the definitions.
\end{proof}

\section{Gradient, Divergence, Laplacian}
In this section we prove analogues of some of the classical theorems of multivariable calculus involving $\nabla, \Div,$ and $\Delta.$ Strictly speaking, only Helmholtz's theorem is needed in the sequel. To be clear, all the results presented here are part of the mathematical folklore. But they are interesting in their own right, show the value of tangent graphs as a conceptual framework, and set the stage for relating the gradient, divergence, and Laplacian to the curl operator, introduced in the next section.

\mpar
For simplicity's sake, we assume from now on that $G$ is connected as the general results follow from this case.

\mpar
The divergence theorem states, for a sufficiently regular Euclidean domain $D$ and sufficiently regular vector field $X $, that,
$$
\int_D\Div X(x)dx= \int_{\partial D} n_D\negthinspace\cdot\negthinspace X(y)\sigma (dy)
$$
where $dx$ is Lebesgue measure, $\sigma (dy)$ is surface measure, and $n_D$ is the normal vector field. This is a conservation law stating the total flux of $X$ in the domain equals the integral of the normal component of $X$ around the boundary. 

\mpar
If $X$ is now a vector field on $G$ and we interpret the coefficient $X(u)$ as the rate at which a unit of substance moves across the edge from $\pi(u)$ to $\pi_+(u)$ then $\Div X(i)$ is the flux of $X$ at $i$, being the difference between the rate at which the substance enters $i$ minus the rate at which it leaves $i$. So, if $H$ is a subgraph of $G$ then $\sum_{j\in V_H}\Div X(j)$ is clearly the total flux of $X$ in $H$. Thus, to formulate the divergence theorem on $G$ we need to define the boundary of $H$, denoted $\partial H$, thought of as a subgraph of $G,$ as well as the normal to $H,$ denoted $n_H,$ thought of as a vector field on $\partial H$.

\begin{definition}
1. Let  $H$ be a subgraph of $G$. The \textit{boundary} of $H$ in $G$ is the graph $\index{$\partial H$}\partial H$ whose edge set is
$$
E_{\partial H} = \{\,\{i,j\}\in E_G\mid i\in V_H,\, j\notin V_H\,\},
$$
and whose vertex set $V_{\partial H}$ is the set of vertices of the edges in $E_{\partial H}$. Specifically, let,
$$ 
V_{\partial H}^- = \{ i\in V_H\mid\exists j\in V_G\setminus V_H\text{ such that } \{i,j\}\in E_{\partial H}\}
$$
and
$$V_{\partial H}^+ = \{ i\in V_G\setminus V_H\mid\exists j\in V_H\text{ such that } \{i,j\}\in E_{\partial H}\}.$$

\mpar
Then, $V_{\partial H}^+\cap V_{\partial H}^- = \emptyset$ and $V_{\partial H}^+\cup V_{\partial H}^-=V_{\partial H}.$ 

\mpar
2. The \textit{normal vector field} of $H$ in $G$ is the vector field $n_H\in\mathcal{X}(\partial H)$ such that 
$$
n_H(u) = \begin{cases}\phantom{-}1, & \text{if } \pi(u)\in V_{\partial H}^+\\ -1, & \text{if } \pi(u)\in V_{\partial H}^-\\ \phantom{-}0, & \text{else.}\end{cases}
$$
\end{definition}

\begin{remark}
A few remarks help to clarify the definition. 1. $\partial H$ is the bipartite subgraph of $G$ induced by the partition $V_{\partial H}= V_{\partial H}^+\cup V_{\partial H}^-.$ 

\mpar
2. $n_H$ is the inward normal since $n_H(u) = 1$ if and only if $\pi_+(u)\in V_H.$  

\mpar
3. An elementary calculation shows that $\nabla 1_H(u) = d 1_H(u)= n_H(u),$ if $u\in V_{t\partial H}$ and $\nabla 1_H(u) = 0,$ otherwise. In fact, this calculation motivates the definitions above and is the key observation used in the proof of the divergence theorem. 

\mpar
4. Since $E_H\cap E_{\partial H}=\emptyset$, the edge structure of $H$ plays no role in the definition of $\partial H$. Consequently, $n_H=n_{H'}$ if and only if $V_H=V_{H'}.$ Perhaps a more accurate notation for the normal vector field might be $n_{V_H},$ but this seems overly pedantic.

\mpar
5. It's natural to ask if this notion of boundary is homological in nature, meaning is the boundary of $\partial H$ empty? The answer is no, in general, if the boundary of $\partial H$ is understood to be its boundary relative to $G, $ as simple examples show. On the other hand, the answer is yes if the boundary of $\partial H$ is understood to be its boundary relative to $\overline{H} = H\cup \partial H.$
\end{remark}

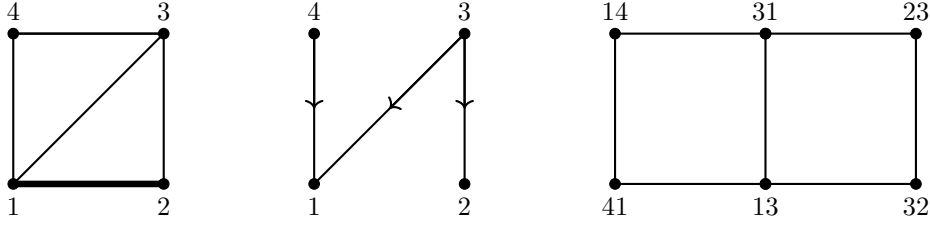
\begin{figure} [h]
\begin{tikzpicture} 
\draw[fill=black] (0,0) circle (2pt);
\draw[fill=black] (0,2) circle (2pt);
\draw[fill=black] (2,0) circle (2pt);
\draw[fill=black] (2,2) circle (2pt);

\node at (0,-0.3) {1};
\node at (0, 2.3) {4};
\node at (2,-0.3) {2};
\node at (2,2.3) {3};

\draw[line width=2.5 pt] (0,0) -- (2,0);
\draw[thick] (2,0) -- (2,2) -- (0,2) -- (0,0);
\draw[thick] (0,2) -- (2,2);
\draw[thick] (0,0) -- (2,2);

\draw[fill=black] (4,0) circle (2pt);
\draw[fill=black] (4,2) circle (2pt);
\draw[fill=black] (6,0) circle (2pt);
\draw[fill=black] (6,2) circle (2pt); 

\node at (4,-0.3) {1};
\node at (4, 2.3) {4};
\node at (6,-0.3) {2};
\node at (6,2.3) {3};

\draw[thick] (6,0) -- (6,2);
\draw[thick] (4,0) -- (4,2);
\draw[thick] (4,0) -- (6,2);

\draw[thick,->] (6,2) -- (6,1);
\draw[thick,->] (4,2) -- (4,1);
\draw[thick,->] (6,2) -- (5,1);

\draw[fill=black] (10,0) circle (2pt);
\draw[fill=black] (10,2) circle (2pt);
\draw[fill=black] (12,0) circle (2pt);
\draw[fill=black] (12,2) circle (2pt);

\draw[fill=black] (8,0) circle (2pt);
\draw[fill=black] (8,2) circle (2pt);
\draw[fill=black] (10,0) circle (2pt);
\draw[fill=black] (10,2) circle (2pt);

\node at (8,-0.3) {41};
\node at (10, -0.3) {13};
\node at (12,-0.3) {32};
\node at (8,2.3) {14};
\node at (10, 2.3) {31};
\node at (12,2.3) {23};

\draw[thick] (8,0) -- (10,0) -- (12,0) -- (12,2) -- (10,2) -- (8,2) --(8,0);
\draw[thick] (10,0) -- (10,2);

\end{tikzpicture}
\caption{From left to right the graphs are $G,\, \partial H,$ and $t\partial H.$ The subgraph $H$ is shown inside $G$ as the thick edge $\{1,2\}.$ Arrowheads on the edges of $\partial H$ indicate the direction in which $n_H = +1.$ }
\end{figure}
\begin{example}
Let $G$ be a rectangle with vertices $\{1,2,3,4\}$ plus a diagonal edge, say, $\{1,3\}.$ (See Figure 2). Let $H$ be the subgraph consisting of a non-diagonal edge, say $H=\{1,2\}.$ Then $\partial H$ is the graph having vertex subsets $V_{\partial H}^- = \{1,2\}$ and $V_{\partial H}^+ = \{3,4\}$ and edge set $H_{\partial H} = \{ \{1,2\}, \{1,3\}, \{2,3\}\},$ as one can verify by inspection. Note that the vertex set of the tangent graph of $\partial H$ is $V_{t\partial H}=\{13, 31, 14, 41, 23, 32\}.$ It's an easy exercise to verify the edges of $t\partial H$ are accurately depicted below.
\end{example}

\begin{proposition} 
(Divergence Theorem) Let $H$ be a subgraph of $G$ and let $X$ be a vector field on $G$. Then,
$$
\sum_{i\in V_H^{\phantom{-}}}\Div X(i) \,= \negthickspace\sum_{j\in V_{\partial H}}^{\phantom{-}} n_H\negthickspace\cdot\negthickspace X(j)\,= \negthickspace\sum_{j\in V_{\partial H}^-}\Div_{\partial H} X(j),
$$
where $\Div_{\partial H}\colon\mathcal{X}(\partial H)\to C(\partial H)$ is the divergence operator associated with the boundary graph.
\end{proposition}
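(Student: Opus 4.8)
The plan is to derive both equalities from one tool, the adjointness $\langle\nabla\phi,X\rangle_{\mathcal{X}(G)}=\langle\phi,\Div X\rangle_{C(G)}$ proved above, applied to the indicator function of a well-chosen vertex set, together with the identity $\nabla 1_H = n_H$ on $V_{t\partial H}$ recorded in the preceding Remark. In this way the theorem requires essentially no computation.

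For the first equality I would test $\Div X$ against the indicator $1_H\in C(G)$ of $V_H$. On one side, $\langle 1_H,\Div X\rangle_{C(G)}=\sum_{i\in V_G}1_H(i)\Div X(i)=\sum_{i\in V_H}\Div X(i)$. On the other side, adjointness turns this into $\langle\nabla 1_H,X\rangle_{\mathcal{X}(G)}=\sum_{u\in V_{tG}}\nabla 1_H(u)X(u)$. Here I invoke the elementary fact from the Remark above: $\nabla 1_H(u)=d1_H(u)$ vanishes unless exactly one of $\pi(u),\pi_+(u)$ lies in $V_H$, i.e.\ unless $u\in V_{t\partial H}$, and on $V_{t\partial H}$ it equals $n_H(u)$. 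So the sum collapses to $\sum_{u\in V_{t\partial H}}n_H(u)X(u)=\sum_{j\in V_{\partial H}}n_H\negthinspace\cdot\negthinspace X(j)$, which is the first equality.

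For the second equality I would apply the first equality — which holds for an arbitrary graph and subgraph — inside the graph $\partial H$, with the subgraph $K$ given by $V_K=V_{\partial H}^-$. Since $\partial H$ is the bipartite graph on $V_{\partial H}^-\cup V_{\partial H}^+$, every edge of $\partial H$ joins $V_K$ to its complement, so the boundary of $K$ inside $\partial H$ is all of $\partial H$, its two boundary parts being $V_{\partial H}^-$ and $V_{\partial H}^+$; consequently its normal vector field coincides with $n_H$ on $V_{t\partial H}$. The divergence theorem for $K$ inside $\partial H$ then reads $\sum_{j\in V_{\partial H}^-}\Div_{\partial H}X(j)=\sum_{j\in V_{\partial H}}n_H\negthinspace\cdot\negthinspace X(j)$, which is exactly the second equality. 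It can also be checked directly: for $j\in V_{\partial H}^-$ each directed edge $u$ of $\partial H$ based at $j$ has $\pi_+(u)\in V_{\partial H}^+$, so $X(\overline u)-X(u)=n_H(\overline u)X(\overline u)+n_H(u)X(u)$, and as $u$ ranges over these edges the pairs $u,\overline u$ exhaust $V_{t\partial H}$ exactly once.

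I do not expect a genuine obstacle. The only thing demanding care is the index bookkeeping between directed edges of $\partial H$ and those of $G$, together with the verification that $\nabla 1_H$ (respectively the normal of $K$ inside $\partial H$) equals $n_H$ on $V_{t\partial H}$; the first is precisely the content of the cited Remark, and the second is immediate from the bipartite description of $\partial H$. The single point worth emphasizing in the write-up is that the boundary of $K$ relative to $\partial H$ is \emph{all} of $\partial H$, since that is what makes the recursive use of the theorem reproduce the full normal $n_H$ rather than a proper sub-object.
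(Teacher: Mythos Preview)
Your proof is correct and, for the first equality, identical to the paper's: both test $\Div X$ against $1_H$, invoke the adjointness $\langle 1_H,\Div X\rangle=\langle\nabla 1_H,X\rangle$, and use $\nabla 1_H=n_H$ on $V_{t\partial H}$.

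For the second equality the paper argues directly, observing that each $u\in V_{t\partial H}$ contributes $\pm X(u)$ according to the sign of $n_H(u)$ and regrouping the sum over base points in $V_{\partial H}^-$. Your primary route is instead to reapply the already-proved first equality to the pair $V_{\partial H}^-\subset\partial H$, using bipartiteness to see that the induced normal is again $n_H$; you then give the direct check as well. Both arguments are valid and short. Your recursive approach is in fact exactly the observation the paper records in the Remark immediately following the proof (where it is phrased as ``iteration yields nothing new''), so you have simply promoted that remark to the proof itself.
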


\begin{proof}
The proof is a simple consequence of the mutual adjointness of the gradient and divergence. Let $1_H$ be the indicator function of $V_H$ and observe that $d 1_H(u) = n_H(u)$ on $V_{t\partial H}$ and $d 1_H(u)=0$ on $V_{tG}\setminus V_{t\partial H}$. Noting that $X$ restricts to a vector field on $\partial H,$ we have,
\begin{align*}
\sum_{j\in V_H}\Div X(j) & = \sum_{i\in V_G} 1_H(i)\Div X(i) = \langle 1_H, \Div X\rangle_{C(G)}\\
& =\langle\nabla 1_H, X\rangle_{\mathcal{X}(G)} = \sum_{u\in V_{t\partial H}}d 1_H(u) X(u)\\
& = \sum_{j\in V_{\partial H}}\sum_{\pi(u)=j} d 1_H(u)X(u) = \sum_{j\in V_{\partial H}} n_H\negthickspace\cdot\negthickspace X(j).  
\end{align*}
Observe that each directed edge $u\in V_{\partial H}$ contributes a term of the form $\pm X(u),$ where we have a plus sign if $\pi(u)\in V_{\partial H}^+$ and a minus sign if $\pi(u)\in V_{\partial H}^-.$ Thus,
$$
\sum_{j\in V_{\partial H}}^{\phantom{-}} n_H\negthickspace\cdot\negthickspace X(j)\, =\negthickspace\negthickspace\sum_{j\in V_{t\partial H}^-}\,\sum_{\substack{u\in V_{t\partial H}\\ \pi(u)=j}}\left(X(\overline{u})- X(u)\right),
$$
which we recognize as $\Div_{\partial H} X,$ the divergence of the restriction of $X,$ integrated over $V_{\partial H}^-.$
\end{proof}

\begin{remark}
1. Because the divergence of a symmetric vector field vanishes identically, the boundary term in the divergence theorem depends only on the asymmetric part of $X,$ thought of as a vector field on $\partial H.$
\mpar
2. When $H$ is an isolated vertex $i_0,$ that is a subgraph of $G$ with one vertex and no edges, then $\partial H$ is the union of all edges of $G$ having $i_0$ as a vertex. Thus,
$$
\Div X(i_0) = \sum_{i\in V_H}\Div X(i) = \sum_{j\in V_{\partial H}} n_H\negthickspace\cdot\negthickspace X(j) = \sum_{\pi (u)=i_0} (X(\overline{u}) - X(u)).  
$$
So, in this case the divergence theorem reduces to the definition of divergence. 

\mpar
3. Consider $V_{\partial H}^-$ as a subgraph of $\partial H$ having vertices but no edges. The divergence theorem applies to the pair $V_{\partial H}^-\subset\partial H$ appearing in the boundary sum and one can apply the divergence theorem to this pair to see if there is an advantage to be gained by iteration. However, $\partial V_{\partial H}^-=\partial H$ so this happens not to be the case.
\end{remark}

\begin{corollary}
(Green's Theorem)  Let $\Delta_{\partial H}$ be the Laplacian of $\partial H.$ Then,
$$
\sum_{i\in V_H}\Delta\phi(i)=\negthickspace\negthickspace\sum_{j\in V_{\partial H}}n_H\negthickspace\cdot\negthickspace\nabla\phi(j) =\negthickspace\sum_{j\in V_{t\partial H}^-}\Delta_{\partial H}\phi(j).
$$
\end{corollary}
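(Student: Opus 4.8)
The plan is to derive this directly from the Divergence Theorem by specializing the vector field to $X=\nabla\phi$. Since $\Delta=\Div\circ\nabla$ by definition, the left-hand sum is $\sum_{i\in V_H}\Delta\phi(i)=\sum_{i\in V_H}\Div X(i)$, so the Divergence Theorem gives at once
$$
\sum_{i\in V_H}\Delta\phi(i)\,=\,\sum_{j\in V_{\partial H}} n_H\cdot X(j)\,=\,\sum_{j\in V_{\partial H}^-}\Div_{\partial H} X(j).
$$
The middle term is exactly $\sum_{j\in V_{\partial H}}n_H\cdot\nabla\phi(j)$, which yields the first asserted equality with no further work.

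For the second equality I would observe that the gradient commutes with restriction to $\partial H$. Indeed $(\nabla\phi)(u)=d\phi(u)=\phi(\pi_+(u))-\phi(\pi(u))$ depends only on the values of $\phi$ at $\pi(u)$ and $\pi_+(u)$, and both of these vertices lie in $V_{\partial H}$ whenever $u\in V_{t\partial H}$; hence the restriction of $\nabla\phi$ to $\partial H$ agrees with $\nabla_{\partial H}$ applied to $\phi|_{V_{\partial H}}$. Applying $\Div_{\partial H}$ to this identity gives $\Div_{\partial H}X=\Div_{\partial H}\circ\nabla_{\partial H}(\phi|_{V_{\partial H}})=\Delta_{\partial H}\phi$, and substituting into the display above produces $\sum_{i\in V_H}\Delta\phi(i)=\sum_{j\in V_{\partial H}^-}\Delta_{\partial H}\phi(j)$, completing the argument.

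There is no real obstacle here, as the statement is an immediate corollary of the Divergence Theorem applied to $\nabla\phi$; the only step requiring a moment's care is the compatibility of $\nabla$ with passage to the subgraph $\partial H$, which follows from the nearest-neighbor (local) nature of the operator $d$, so that $d\phi$ computed in $G$ and in $\partial H$ coincide on $V_{t\partial H}$.
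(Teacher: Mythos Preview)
Your argument is correct and follows essentially the same route as the paper: apply the Divergence Theorem to $X=\nabla\phi$, then use that the restriction of $\nabla\phi$ to $\partial H$ coincides with $\nabla_{\partial H}\phi$ so that $\Div_{\partial H}X=\Delta_{\partial H}\phi$. The paper states the restriction compatibility in one line, whereas you spell out why $d\phi$ is local and hence agrees on $V_{t\partial H}$; otherwise the two proofs are the same.
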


\begin{proof}
Observe that $\nabla\phi\in\mathcal{X}(G)$ restricts to the vector field $\nabla_{\partial H}\phi\in\mathcal{X}(\partial H),$ hence,
$$
\sum_{j\in V_{\partial H}}n_H\negthickspace\cdot\negthickspace\nabla\phi(j) = \negthickspace\sum_{j\in V_{\partial H}^-}\Div_{\partial H}\nabla_{\partial H}\phi(j) = \negthickspace\sum_{j\in V_{\partial H}^-}\Delta_{\partial H}\phi(j).
$$

According to Remark 3.5.3 above, applying the Green's theorem to the pair $V_{\partial H}^-\subset \partial H$ does not improve the result.
\end{proof}

The next result is an extention of Green's theorem to all first order vector fields.

\begin{theorem}
Let $H$ be a subgraph of $G$ and $X \in\mathcal{X}(G)$ be a vector field. Then,
\begin{align*}
\sum_{i\in V_H} X\phi(i) & =\sum_{i\in V_H}\phi(i)\Div X(i) + \sum_{j\in V_{\partial H}}n_H\negthickspace\cdot\negthinspace\phi\overline{X}\\
& = \sum_{i\in V_H}\phi(i)\Div X(i) + \sum_{j\in V_{\partial H}^-}\Div_{\partial H}\phi\overline{X},
\end{align*}
where $\phi\overline{X}$ is the vector field with coefficients $(\phi\overline{X})(u) =\phi(\pi(u))X(\overline{u}).$ In particular, when $X\equiv -2$ then $X=\Delta,\, \Div X = 0$ and, 
$$
\sum_{j\in V_{\partial H}}n_H\negthinspace\cdot\negthinspace\phi\overline{X}=\sum_{j\in V_{\partial H}}n_H\negthinspace\cdot\negthinspace\nabla\phi(j),
$$
which reduces to Green's theorem.
\end{theorem}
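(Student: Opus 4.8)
The plan is to read the identity off from the local form of the adjoint recorded in Proposition 2.8.5, and then strip off the boundary term with the divergence theorem. First I would pair $X\phi$ against the indicator $1_H$ of $V_H$ in the inner product on $C(G)$:
$$
\sum_{i\in V_H}X\phi(i)=\langle X\phi,1_H\rangle_{C(G)}=\langle\phi,X^*1_H\rangle_{C(G)}.
$$
By Proposition 2.8.5, $X^*1_H=\overline X1_H+1_H\,\Div X$, and since $1_H(i)\,\Div X(i)$ is supported on $V_H$ this already yields
$$
\sum_{i\in V_H}X\phi(i)=\langle\phi,\overline X1_H\rangle_{C(G)}+\sum_{i\in V_H}\phi(i)\,\Div X(i),
$$
so it remains to identify the first summand with the asserted boundary integral.

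For that I would use $\overline X1_H(i)=\sum_{\pi(u)=i}X(\overline u)\,d1_H(u)$ (Proposition 2.8.4 applied to $\overline X$) together with the fact, already exploited in the proof of the divergence theorem, that $d1_H(u)=n_H(u)$ on $V_{t\partial H}$ and vanishes on $V_{tG}\setminus V_{t\partial H}$ (Remark 3.2.3). Interchanging the order of summation converts $\langle\phi,\overline X1_H\rangle_{C(G)}$ into
$$
\sum_{u\in V_{t\partial H}}\phi(\pi(u))\,X(\overline u)\,n_H(u)=\sum_{u\in V_{t\partial H}}(\phi\overline X)(u)\,n_H(u)=\sum_{j\in V_{\partial H}}n_H\cdot\phi\overline X(j),
$$
where $\phi\overline X$ is regarded as a vector field on $\partial H$; this is legitimate because $n_H$ is supported on $V_{t\partial H}$. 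That proves the first displayed equality of the theorem, and the second follows at once from the rightmost equality of the divergence theorem (Proposition 3.4) --- really the combinatorial identity $\sum_{j\in V_{\partial H}}n_H\cdot Y(j)=\sum_{j\in V_{\partial H}^-}\Div_{\partial H}Y(j)$, valid for any $Y\in\mathcal{X}(\partial H)$ --- applied with $Y=\phi\overline X$.

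For the case $X\equiv-2$ one has $\overline X=X$ and $\Div X\equiv0$, so the interior term drops out and the first order operator attached to $X$ is $\Delta$; it then remains only to check $\sum_{j\in V_{\partial H}}n_H\cdot\phi\overline X=\sum_{j\in V_{\partial H}}n_H\cdot\nabla\phi$. Pairing each $u\in V_{t\partial H}$ with $\overline u$ and taking the representative with $\pi(u)\in V_H$, one finds both sides contribute $2\bigl(\phi(\pi(u))-\phi(\pi_+(u))\bigr)$ on the corresponding edge of $\partial H$, so the identity collapses to Green's theorem, Corollary 3.6. I do not anticipate a genuine obstacle: the one point requiring care is the bookkeeping that confines the support of $\overline X1_H$ to $V_{\partial H}$, so that the $C(G)$-inner product $\langle\phi,\overline X1_H\rangle$ honestly reduces to a sum over the boundary; the rest is a rearrangement of sums already carried out in Propositions 2.8 and 3.4.
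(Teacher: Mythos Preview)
Your proposal is correct and follows essentially the same route as the paper's proof: pair with $1_H$, apply the adjoint formula $X^*=\overline{X}+m(\Div X)$ from Proposition~2.8.5, and use $d1_H=n_H$ on $V_{t\partial H}$ to convert the $\overline{X}1_H$ term into the boundary sum, with the second displayed equality coming from the divergence theorem. The only cosmetic difference is in the $X\equiv -2$ verification, where the paper computes $\sum_{j\in V_{\partial H}^-}\Div_{\partial H}\phi\overline{X}(j)$ directly rather than pairing $u$ with $\overline{u}$, but both calculations are equivalent.
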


\begin{proof}
According to Proposition 1.7.5 we have,
\begin{align*}
\sum_{i\in V_H} X\phi(i) & =\sum_{i\in V_G} X\phi(i) = \langle 1_H, X\phi\rangle_{C(G)}\\
& = \langle X^* 1_H, \phi\rangle_{C(G)}\\
& = \langle\overline{X}1_H, \phi\rangle_{C(G)}\, + \,\langle 1_H \Div X, \phi\rangle_{C(G)}\\
& = I + II.
\end{align*}
Evidently, $II = \sum_{i\in V_H}\phi(i) \Div X(i)$ and,
\begin{align*}
I  & = \sum_{i\in V_G} \phi(i) \negthickspace\sum_{\pi(u)=i}\overline{X}(u) d1_H(u)\\
&  = \sum_{u\in V_{t\partial H}} \negthickspace\negthickspace n_H(u)\phi(\pi(u))\overline{X}(u)\\
& = \sum_{j\in V_{\partial H}}\negthickspace\negthickspace n_H\negthickspace\cdot\negthinspace \phi\overline{X}(j)\\
&  =\sum_{j\in V_{\partial H}^-}\negthickspace\negthickspace\Div_{\partial H}\phi\overline{X}(j),
\end{align*}
according to the divergence theorem. It remains to evaluate this last sum when $X\equiv -2.$ We have,
$$
\sum_{j\in V_{\partial H}^-}\Div_{\partial H}\phi\overline{X}(j) = \sum_{j\in V_{\partial H}^-}\negthickspace\negthickspace -2\negthickspace\negthickspace\sum_{\substack{u\in V_{t\partial H}\\ \pi(u)=j}} [\phi(\pi(\overline{u}))-\phi(\pi(u))] = \sum_{j\in V_{\partial H}^-}\Delta_{\partial H}\phi(j),
$$
which is Green's theorem.
\end{proof}

\begin{remark}
This result has a natural physical interpretation in terms of fluid flow in a passive, spatial network of reservoirs and pipes modelled by the vertices and edges, respectively, of $G$. Assume that resevoirs are cylinders and we indicate the volume of fluid in resevoir $i$ by the height of the fluid surface $\phi(i).$ The rate at which fluid is passes through the one-way pipe $(i, j) $ from reservoir $i$ to reservoir $j$ is proportional to the height difference $\phi(j)-\phi(i),$ or the \textit{head}, with a rate constant $X(ij)$ determined by the dimensions of the cylinders and pipes. Then, the instantaneous rate of change of the volume of fluid at reservoir $i$ is $X\phi(i)$ and $\sum_{i\in V_H}X\phi(i)$ is the instantaneous rate of change of total fluid volume in the sub-network modelled by $V_H$. The theorem states that the volume rate of change of the sub-network that is not capured by calculating the net flux at the individual reservoirs of $V_H$ is accounted for by flow through the boundary pipes of the sub-network.
\end{remark}

Next we present a well-known but key lemma describing the kernel and image of the gradient, divergence, and Laplacian. It is used in the proof of Green's identities as well as the proof of Helmholtz's theorem.
\begin{lemma}
1. The kernel of $\Delta$ is the one dimensional space of constant functions and the image of $\Delta$ is $\mathring{C}(G)$. Hence, $\Delta\colon\mathring{C}(G)\to\mathring{C}(G)$
is an isomorphism whose inverse is denoted by $\Delta^{-1}$. 

\mpar
2. The kernel of $\nabla$ is also the one dimensional space of constant functions and $\nabla\colon\mathring{C}(G)\to\Image(\nabla)$ is an isomorphism. 

\mpar
3. We have $\Image(\Div)=\mathring{C}(G)$ and $\Ker(\Div)=\Image(\nabla)^{\perp}$.
\end{lemma}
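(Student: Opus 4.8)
The plan is to deduce all three parts from a single genuinely geometric fact — that $\Ker(\nabla)$ consists exactly of the constant functions when $G$ is connected — together with the formal properties of adjoint operators, using the already established identity $\Div=\nabla^{*}$, the factorization $\Delta=\Div\circ\nabla=\nabla^{*}\circ\nabla$, and the observation that $\mathring{C}(G)$ is precisely the orthogonal complement in $C(G)$ of the line of constant functions, since $\langle\phi,\mathbf{1}\rangle_{C(G)}=\sum_{i\in V_G}\phi(i)$.

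First I would prove part 2. By definition $\nabla\phi=\sum_{u\in V_{tG}}d\phi(u)e_u$, so $\nabla\phi=0$ iff $d\phi(u)=\phi(\pi_+(u))-\phi(\pi(u))=0$ for every $u\in V_{tG}$, i.e.\ iff $\phi$ is constant along every edge, i.e.\ iff $\phi$ is constant on each connected component of $G$. Since $G$ is connected this forces $\phi$ to be globally constant, so $\Ker(\nabla)$ is the one-dimensional space of constants. Because $\mathring{C}(G)$ meets this line only in $0$ and $\dim\mathring{C}(G)=|V_G|-1=\dim C(G)-\dim\Ker(\nabla)$, the restriction $\nabla|_{\mathring{C}(G)}$ is injective and hence an isomorphism onto $\Image(\nabla)=\Image(\nabla|_{\mathring{C}(G)})$.

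Next, part 1. From $\Delta=\nabla^{*}\circ\nabla$ we get $\langle\Delta\phi,\phi\rangle_{C(G)}=\|\nabla\phi\|_{\mathcal{X}(G)}^{2}$, so $\Delta\phi=0$ iff $\nabla\phi=0$, and by part 2 the kernel of $\Delta$ is again the line of constants. Since $\Delta$ is self-adjoint (shown above), $\Image(\Delta)=\Ker(\Delta)^{\perp}=\{\text{constants}\}^{\perp}=\mathring{C}(G)$. In particular $\Delta$ carries $\mathring{C}(G)$ into $\mathring{C}(G)$, and on that space it is injective between spaces of equal dimension $|V_G|-1$, hence an isomorphism, whose inverse is the operator denoted $\Delta^{-1}$. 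Finally, part 3 is pure linear algebra applied to the adjoint pair $\nabla,\ \Div=\nabla^{*}$: in general $\Ker(A^{*})=\Image(A)^{\perp}$ and $\Image(A^{*})=\Ker(A)^{\perp}$; taking $A=\nabla$ yields $\Ker(\Div)=\Image(\nabla)^{\perp}$ and $\Image(\Div)=\Ker(\nabla)^{\perp}=\{\text{constants}\}^{\perp}=\mathring{C}(G)$.

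I do not expect any real obstacle: the only non-formal input is the connectedness argument for $\Ker(\nabla)$ in part 2, and everything else is bookkeeping with adjoints, orthogonal complements, and dimension counts. The one point deserving care is to obtain $\Image(\Delta)=\mathring{C}(G)$ from self-adjointness of $\Delta$ rather than from a direct computation, and to record explicitly at the outset that $\mathring{C}(G)$ is the orthogonal complement of the constants, so that the kernel/image statements for $\nabla$, $\Div$, and $\Delta$ all line up.
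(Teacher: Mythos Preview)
Your proposal is correct and follows essentially the same route as the paper: both identify $\Ker(\nabla)$ as the constants via connectedness, obtain $\Ker(\Delta)=\Ker(\nabla)$ from $\langle\Delta\phi,\phi\rangle=\|\nabla\phi\|^2$, and derive the remaining image/kernel statements from the adjointness $\Div=\nabla^*$. The only cosmetic difference is that where the paper invokes rank--nullity for $\Image(\Delta)$ and exhibits $X=\nabla\Delta^{-1}\phi$ explicitly for $\Image(\Div)=\mathring{C}(G)$, you appeal directly to the finite-dimensional identities $\Image(\Delta)=\Ker(\Delta)^\perp$ and $\Image(\nabla^*)=\Ker(\nabla)^\perp$; this is a packaging choice, not a different argument.
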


\begin{proof}
Suppose $\Delta\phi$ = 0. Then,
$$
0 = \sum_{i\in V_G}\phi (i)\Delta\phi (i) = \sum_{i\in V_G} |\nabla\phi (i)|^2.
$$
Therefore $\nabla\phi = 0,$ hence $\phi$ is constant since $G$ is connected. Thus, $\Ker(\nabla)\cong\mathbb{R}$ and the restriction of the gradient to $\mathring{C}(G)$ is an isomorphism onto its image.

\mpar
Next, let $\psi = 1$ be the constant function. Then,
$$
\sum_{i\in V_G}\Delta\phi (i) =\sum_{i\in V_G} \psi (i)\Delta\phi (i) = \sum_{i\in V_G}\nabla\psi (i)\nabla\phi (i) = 0
$$
hence the image of $\Delta$ lies inside $\mathring{C}(G)$. By the rank-nullity theorem, $|C(G)|= |\Ker(\Delta)| + |\Image(\Delta)|.$ Hence, $|\Image\Delta| = |C(G)| -1$ and therefore $\Image (\Delta)=\mathring{C}(G).$ Since the restriction of $\Delta$ to $\mathring{C}(G)$ is both injective and surjective, it is invertible with inverse $\Delta^{-1}.$ 

\mpar
Again, let $\psi = 1$ be the constant function. Then,
$$
\sum_{i\in V_G}\Div X(i) =\sum_{i\in V_G}\psi (i)\Div X(i) = \sum_{i\in V_G}\nabla\psi (i)\negthinspace\cdot\negthinspace X(i) = 0,
$$
hence $\Image (\Div)\subset\mathring{C}(G).$ On the other hand, suppose $\phi\in\mathring{C}(G)$ and let,
$$
X=\nabla\circ\Delta^{-1}\phi.
$$ 
Then $\Div X = \Div\circ\,\nabla\circ\Delta^{-1}\phi=\phi$ hence $\Image (\Div) = \mathring{C}(G).$ 

\mpar
Finally, note that $\Div X = 0$ if and only if for all $\phi\in C(G),$
$$
0=\sum_{i\in V_G}\phi (i) \Div X(i) = \sum_{i\in V_G}\nabla \phi (i) X(i),
$$
and this is equivalent to saying $X\in\Ker(\Div)$ if and only if $X\in\Image (\nabla)^{\perp}$.
\end{proof}

\begin{definition}
1. For every $\phi\in C(G)$ and any subgraph $H$ of $G$ let,
$$
\overline{\phi}_H=\frac{1}{|V_H|}\sum_{i\in V_H}\phi(i)
$$
be the average of $\phi$ over $H$.

\mpar
2. For every $i\in V_G,$ Green's function with pole at $i,$ denoted $G_i,$ is defined by the formula,
$$
G_i(j)=\Delta^{-1}\left( e_i-\frac{1}{|V_G|}\right)(j).
$$
\end{definition}

\begin{proposition}
(Green's Identities) Let $H$ be a subgraph of G and $\phi, \psi\in C(G)$. Then we have Green's first identity,
$$
\sum_{j\in V_H} [\psi (j)\Delta\phi (j) - \nabla\psi\negthinspace\cdot\negthinspace\nabla\phi (j)]  = \,\sum_{j\in V_{\partial H}} n_H\negthinspace\cdot\negthinspace\psi\nabla\phi (j),
$$
second identity,
$$
\sum_{j\in H}[\psi(j)\Delta\phi(j) - \phi(j)\Delta\psi(j)]  = \,\sum_{j\in V_{\partial H}}[\psi(j)\, n_H\negthickspace\cdot\negthickspace\nabla\phi(j) - \phi(j)\, n_H\negthickspace\cdot\negthinspace\nabla\psi(j)],
$$
and third identity,
$$
\sum_{j\in V_H}G_i(j)\Delta\phi(j) = \,(\phi(i)1_H(i) - \frac{|V_H|}{|V_G|}\,\overline{\phi}_H)\\ + \sum_{j\in V_{\partial H}}[G_i(j)\, n_H\cdot\nabla\phi(j)-\phi(j)\, n_H\cdot\nabla G_i(j)].
$$
In particular, if $i\in V_H$ and $\overline{\phi}_H=0$ then,
$$
\sum_{j\in V_H}G_i(j)\Delta\phi(j) = \phi(i)+\sum_{j\in V_{\partial H}}[G_i(j)\, n_H\negthickspace\cdot\negthickspace\nabla\phi(j)-\phi(j)\,n_H\negthickspace\cdot\negthickspace\nabla G_i(j)].
$$
\end{proposition}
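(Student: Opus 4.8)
The plan is to obtain all three identities as algebraic consequences of the divergence theorem (Proposition~3.4), applied to suitable vector fields, exactly as in the classical case. The one computation that is genuinely new is the graph analogue of the product rule: for $\phi,\psi\in C(G)$, letting $\psi\nabla\phi$ denote the vector field with coefficients $(\psi\nabla\phi)(u)=\psi(\pi(u))\,d\phi(u)$, one has
\[
\Div(\psi\nabla\phi)(i)=\psi(i)\Delta\phi(i)-\nabla\psi\negthinspace\cdot\negthinspace\nabla\phi(i).
\]
To check this, expand $\Div(\psi\nabla\phi)(i)=\sum_{\pi(u)=i}\bigl[(\psi\nabla\phi)(\overline u)-(\psi\nabla\phi)(u)\bigr]$, use $\pi(\overline u)=\pi_+(u)$ and $d\phi(\overline u)=-d\phi(u)$ to rewrite it as $-\sum_{\pi(u)=i}\bigl[\psi(\pi_+(u))+\psi(i)\bigr]d\phi(u)$, and split $\psi(\pi_+(u))+\psi(i)=d\psi(u)+2\psi(i)$: the first part contributes $-\nabla\psi\negthinspace\cdot\negthinspace\nabla\phi(i)$ and the second contributes $-2\psi(i)\sum_{\pi(u)=i}d\phi(u)=\psi(i)\Delta\phi(i)$ by the formula $\Delta\phi(i)=-2\sum_{\pi(u)=i}d\phi(u)$. (Note the cross term carries a minus sign, unlike in $\mathbb R^3$, because of the sign convention making $\Delta$ non-negative.)

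Granting the product rule, the \emph{first identity} is immediate: the divergence theorem applied to $X=\psi\nabla\phi$ reads $\sum_{i\in V_H}\Div(\psi\nabla\phi)(i)=\sum_{j\in V_{\partial H}}n_H\negthinspace\cdot\negthinspace\psi\nabla\phi(j)$, and the product rule turns the left side into $\sum_{i\in V_H}[\psi(i)\Delta\phi(i)-\nabla\psi\negthinspace\cdot\negthinspace\nabla\phi(i)]$; the right side is already the claimed boundary term, and equals $\sum_j\psi(j)\,n_H\negthinspace\cdot\negthinspace\nabla\phi(j)$ after pulling $\psi(j)$ out of the dot product. For the \emph{second identity} I would write the first identity again with $\phi$ and $\psi$ interchanged and subtract; the terms $\nabla\psi\negthinspace\cdot\negthinspace\nabla\phi(j)$ are symmetric in $\phi,\psi$ and cancel, leaving $\sum_{j\in V_H}[\psi\Delta\phi-\phi\Delta\psi]=\sum_{j\in V_{\partial H}}[\psi\,n_H\negthinspace\cdot\negthinspace\nabla\phi-\phi\,n_H\negthinspace\cdot\negthinspace\nabla\psi]$.

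For the \emph{third identity} I would specialize the second identity to $\psi=G_i$. By Lemma~3.9, $e_i-\tfrac{1}{|V_G|}\in\mathring{C}(G)$ lies in $\Image(\Delta)$ and $\Delta^{-1}$ is defined on it, so $\Delta G_i(j)=e_i(j)-\tfrac{1}{|V_G|}$ for every $j$. The only summand in the second identity affected by this substitution is $\sum_{j\in V_H}\phi(j)\Delta G_i(j)=\sum_{j\in V_H}\phi(j)\bigl(e_i(j)-\tfrac{1}{|V_G|}\bigr)=\phi(i)1_H(i)-\tfrac{|V_H|}{|V_G|}\overline{\phi}_H$; moving it to the other side gives the stated formula. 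The final display is the special case $i\in V_H$, so that $1_H(i)=1$, together with $\overline{\phi}_H=0$, which deletes the term $\tfrac{|V_H|}{|V_G|}\overline{\phi}_H$.

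There is no real obstacle; the only delicate point is the product-rule computation and, in particular, getting its sign right, after which each identity is pure bookkeeping resting on the divergence theorem and the invertibility of $\Delta$ on $\mathring{C}(G)$.
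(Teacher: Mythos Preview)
Your proposal is correct and follows essentially the same route as the paper: establish the product rule $\Div(\psi\nabla\phi)(i)=\psi(i)\Delta\phi(i)-\nabla\psi\cdot\nabla\phi(i)$ by the same add-and-subtract computation, apply the divergence theorem to obtain the first identity, subtract the version with $\phi,\psi$ interchanged for the second, and specialize to $\psi=G_i$ using $\Delta G_i=e_i-\tfrac{1}{|V_G|}$ for the third. The only cosmetic difference is that you explicitly note $n_H\cdot\psi\nabla\phi(j)=\psi(j)\,n_H\cdot\nabla\phi(j)$, which the paper leaves implicit when passing from the first identity's boundary term to the second's.
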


\begin{proof}
We prove the identities in order. Let $X=\psi\nabla\phi$. Then,
\begin{align*}
\Div X(i) & =\phantom{-}\sum_{\pi(u)=i}[X(\overline{u}) - X(u)]\\ 
& = \phantom{-}\sum_{\pi(u)=i}[\psi(\pi_+(u)) d\phi(\overline{u}) - \psi(i) d\phi(u))]\\ 
& =  -\sum_{\pi(u)=i}[\psi(\pi_+(u))+\psi(i)] d\phi(u)\pm\psi(i) d\phi(u)\\
& =  -\sum_{\pi(u)=i}[d\psi(u)\,d\phi(u) +2\psi(i)\,d\phi(u)]\\ 
& =  \phantom{-}\,\psi(i)\Delta\phi(i) - \nabla\psi\negthinspace\cdot\negthinspace\nabla\phi(i).  
\end{align*}
Thus,
\begin{align*}
\sum_{j\in V_H}[\psi(j)\Delta\phi(j)-\nabla\psi\cdot\nabla\phi(j)] & =\sum_{j\in V_H}\Div X(j)\\
& =  \sum_{j\in V_{\partial H}} n_H\negthickspace\cdot\negthickspace X(j)\\
&  =\sum_{j\in V_{\partial H}} n_H\negthickspace\cdot\negthinspace\psi\nabla\phi(j), 
\end{align*}
and this proves the first identity. 

\mpar
Using Green's first identity applied to $X=\psi\nabla\phi$ and $Y=\phi\nabla\psi$ then subtracting the results gives the second identity. 

\mpar
Finally, the third identity follows from the second if we use the fact that, 
$$
\Delta G_i (j) = e_i(j)-\frac {1}{|V_G|}.
$$
and choose $\psi = G_i$.
\end{proof}

\begin{remark}
1. Define an operator $G$ by the formula $G\phi(j)=\sum_{i\in V_G}\phi(i)G_i(j)$. Then it follows from the definitions that $G\colon\negthinspace\mathring{C}(G)\to\negthinspace\mathring{C}(G)$ and $\Delta G\phi = G\Delta\phi = \phi$. This is just to say that the numerical values of Green's function are the matrix elements of $\Delta^{-1}$, namely, $\Delta^{-1}(i,j)=G_j(i).$

\mpar
2. Let $\Delta^H$ be the restriction of $\Delta$ to $C(H)$ and for $\phi\in C(H)$ let $G^H\phi =G(1_H\phi)$. Then Green's third identity says that $G^H$ almost inverts $\Delta^H$ on $\mathring{C}(G)$ in the sense that $G^H\Delta^H\phi =\phi$ plus an error term that is expressed as a sum over the boundary $\partial H$.

\mpar
3. Note that $\Delta^H$ is not the Laplacian $\Delta_H$ of $H$ in general, but it is so if further conditions are imposed on $E_H$ and $\phi$. First, assume that $H$ is the induced subgraph of $V_H$ meaning every edge of $G$ having both endpoints in $H$ is also an edge of $H$. Second, assume $\phi$ satisfies Neumann boundary conditions $\phi(\pi_+(u))=\phi(\pi(u))$ for all $u\in V_{t\partial H}$ or, equivalently, $d\phi(u)=0.$ Under these conditions we have,
$$
\{u\in V_{tH}\mid \pi(u)\in V_H\setminus V_{\partial H^-}\} = \{u\in V_{tG}\mid\pi(u)\in V_H\setminus V_{\partial H^-}\},
$$
hence $\Delta_H\phi(i)=\Delta\phi(i)$ for all interior points $i\in V_H\setminus V_{\partial H^-}$. On the other hand, if $i\in V_{\partial H^-}$ is a boundary point then, 
$$
\{u\in V_{tG}\mid \pi(u)=i\} = \{u\in V_{tH} \mid\pi(u)=i\}\cup \{u\in V_{t\partial H}\mid \pi(u)=i\},
$$
hence $\Delta_H\phi(i)=\Delta\phi(i)$ because $d\phi(u)$ vanishes if $u\in V_{t\partial H}$. Observing that $C(H)$ is isomorphic to the subspace of $C(H\cup\partial H)$ satisfying Neumann boundary conditions, we observe that  Green's theorem verifies that $\Delta^H\colon\mathring{C}(H)\to\mathring{C}(H)$ is an isomorphism and that Green's third identity states that $G^H$ almost inverts $\Delta_H$ on $\mathring{C}(G).$

\mpar
4. These observations were important historically because there are explicit formulas for Green's function in $\mathbb{R}^d.$ Think of the restriction of Green's function to a domain $D$ as defining an integral operator. The fact that this operator almost inverts the Laplacian on $D$ effectively reduced the solution of the Dirichlet problem in $D$ to the solution of more tractable integral equations on the boundary $\partial D.$ This doesn't seem particularly valuable in the case of graphs since there are now fast Monte Carlo approximations for solving boundary value problems on meshes and networks. However, it is interesting that tangent graphs have a sufficiently rich intrinsic geometric structure to support analogues of these classical results. 
\end{remark}

We end this section with Helmholtz's theorem on the decomposition of vector fields which we'll elaborate upon in the next section.

\begin{proposition} (Helmholtz's Theorem) Every vector field on $G$ can be uniquely written as the sum of a gradient field and a divergence-free field. More precisely,
$$
\mathcal{X}(G) = \Image (\nabla) \oplus \Ker (\Div)
$$
is an othogonal decomposition.
\end{proposition}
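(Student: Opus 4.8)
The plan is to read the statement straight off the preceding lemma, combined with the elementary linear-algebra fact that a finite-dimensional inner product space is the orthogonal direct sum of any subspace with its orthogonal complement. First I would recall that $\mathcal{X}(G)$ is finite-dimensional, of dimension $2|E_G|$, and carries the inner product $\langle\cdot,\cdot\rangle_{\mathcal{X}(G)}$ inherited from $C(tG)$; hence for the subspace $\Image(\nabla)$ one has the orthogonal decomposition $\mathcal{X}(G) = \Image(\nabla)\oplus\Image(\nabla)^{\perp}$. Part~3 of the lemma above identifies $\Ker(\Div)$ with $\Image(\nabla)^{\perp}$, so substituting gives exactly $\mathcal{X}(G) = \Image(\nabla)\oplus\Ker(\Div)$ as an orthogonal decomposition. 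That is really the whole argument; there is no substantive obstacle, since all the content was already extracted in the lemma — the only point requiring a little care is that $\Delta^{-1}$ is only defined on $\mathring{C}(G)$.

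For transparency I would also spell the decomposition out explicitly and check uniqueness by hand. Given $X\in\mathcal{X}(G)$, the lemma tells us $\Div X\in\mathring{C}(G)$, so $\phi := \Delta^{-1}\Div X$ is a well-defined element of $\mathring{C}(G)$. Put $Y = \nabla\phi\in\Image(\nabla)$ and $Z = X - Y$. Then $\Div Z = \Div X - \Div\nabla\phi = \Div X - \Delta\phi = 0$, so $Z\in\Ker(\Div)$ and $X = Y + Z$ realizes the decomposition. Orthogonality of the two summands is immediate from adjointness of $\nabla$ and $\Div$: $\langle\nabla\phi, Z\rangle_{\mathcal{X}(G)} = \langle\phi,\Div Z\rangle_{C(G)} = 0$.

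Finally, for uniqueness, if $\nabla\phi_1 + Z_1 = \nabla\phi_2 + Z_2$ with $Z_1, Z_2\in\Ker(\Div)$, then $\nabla(\phi_1 - \phi_2) = Z_2 - Z_1$ lies in $\Image(\nabla)\cap\Ker(\Div) = \Image(\nabla)\cap\Image(\nabla)^{\perp} = \{0\}$, so $Z_1 = Z_2$, and since $\nabla$ is injective on $\mathring{C}(G)$ the functions $\phi_1,\phi_2$ agree up to an additive constant, i.e. the gradient summands coincide. This completes the argument.
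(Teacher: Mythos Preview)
Your proof is correct and essentially the same as the paper's. The paper packages your explicit construction as the projection operator $p_{\nabla} = \nabla\circ\Delta^{-1}\circ\Div$ and verifies $p_{\nabla}^2 = p_{\nabla}$ directly; your $Y = \nabla\Delta^{-1}\Div X$ is precisely $p_{\nabla}X$, and your opening appeal to $\Ker(\Div) = \Image(\nabla)^{\perp}$ from the lemma is just a slightly more streamlined route to the same conclusion.
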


\begin{proof}
Let $p_{\nabla}\colon\mathcal{X}(C)\to\mathcal{X}(G)$ be defined by
$$
p_{\nabla} = \nabla\circ\Delta^{-1}\circ\Div
$$
and observe that it is a projection,
\begin{align*}p_{\nabla}^2 = & (\nabla\circ\Delta^{-1}\circ\Div ) (\nabla\circ\Delta^{-1}\circ\Div)\\= &   \nabla\circ\Delta^{-1}\circ\Delta\circ\Delta^{-1}\circ\Div\\= & \nabla\circ\Delta^{-1}\circ\Div = p_{\nabla}.\end{align*}
Since $\nabla\circ\Delta^{-1}\colon\mathring{C}(G)\to\Image{\nabla}$ is an isomorphism it follows from Proposition 2.8.2 that,
$$
\Image (p_{\nabla})=\Image (\nabla) \,\,\text{and}\,\, \Ker(p_{\nabla})=\Ker(\Div)=\Image(\nabla)^{\perp},
$$
Thus, $\Image(\nabla)$ and $\Ker(\Div)$ are complementry orthogonal subspaces of $\mathcal{X}(G).$
\end{proof}

\section{Curl of a Vector Field}

It's easy to see that if $s\colon\mathcal{X}(G)\to\mathcal{X}(C)$ is the projection onto $\mathcal{X}^s(C),$ the space of symmetric vector fields, then $\Image(\nabla)\subset\Ker(s)$ and $\Image(s)\subset\Ker(\Div).$ Thus,
$$
0\xrightarrow{}\mathring{C}(G)\xrightarrow{\nabla}\mathcal{X}(G)\xrightarrow{s}\mathcal{X}(G)\xrightarrow{\Div}\mathring{C}(G)\xrightarrow{} 0
$$
is an exact sequence. The formal similarity of this sequence with the relations among the gradient, divergence, and curl of vector fields in $\mathbb{R}^3$ leads some authors to regard the projection $s$ as the curl operator on vector fields on a graph. However, there are other interesting operators that form an exact sequence with $\nabla$ and $\Div$ as above, not just $s.$ Motivated by the classical case, and by \cite{S}, we propose a new and geometrically meaningful operator as the definition of the curl.

\mpar
In $\mathbb{R}^3,$ the component of $\Curl X$ at point $x$ in direction $u$, where $u$ is a unit vector, is computed in the following way. It is the limit of the line intergral of $X$ around the boundary of a small surface in the plane perpendicular to $u$ containing $x,$ divided by the area of the surface, as the surface shrinks to a point. While this definition can't be extended to graphs, it suggests a way forward by asking $\Curl X$ to have the same line integrals as $X$ around certain closed walks in $G$.

\mpar

Let's begin by looking at a consequence of Helmholtz's theorem. If $X\in\mathcal{X}(G)$ and $Y = (1-p_{\nabla}) X$ then $Y$ has two properties: it is divergence-free and it has the same line integrals as $X$ along any closed path. The first property holds because $\Image (1-p_{\nabla})=\Ker (\Div)$ and the second property holds because $X-Y$ is the gradient of a function and gradient fields have vanishing line integrals along closed walks. 

\mpar
The analogy with $\Curl$ in $\mathbb{R}^3$ suggests we only require that $X$ and $Y$ have the same line integrals around simple cycles; that is, subgraphs of $G$ isomorphic to $C_n$ for some $n\geq 3$. (Note that we explicity exclude closed walks of length two that traverse a single edge). In doing so, we would be justified in saying that $Y$ has the same circulation as $X$. But the two requirements expected of a geometrically meaningful notion of $\Curl X$ - that $Y$ has zero divergence and the same circulation as $X$ - are not enough to specify $Y$ uniquely. This is because there exist nonzero vector fields that are both divergence-free and circulation-free. However, there is a unique such $Y$ that is orthogonal to the space of divergence-free and circulation-free fields and this suggests the definition of curl.

\mpar
To make sense of these observations let us be precise about the terms of argument.

\begin{definition}
1. A \textit{walk} $\omega$ in $G$ is a sequence of mutually adjacent vertices: $\{\omega_{n-1}, \omega_n\}\in E_G$ for all $1\leq n\leq N$ and $N\geq 3.$ We write the walk as a word $\omega=\omega_0\,\omega_1\,\omega_2\cdots\omega_N$ where $N=N(\omega)\geq 1$ is the \textit{length} of the walk.

\mpar
2. A \textit{trail} is a walk with no repeated edges: $\{\omega_n, \omega_{n+1}\}\neq \{\omega_m, \omega_{m+1}\}$ for $1\leq n\neq m\leq N-1.$

\mpar
3. A \textit{circuit} is a closed trail: $\omega_N=\omega_0.$ 

\mpar
4. A \textit{simple circuit} is a circuit with no repeated vertices: $\omega_n\neq\omega_m$ for $1\leq n\neq m\leq N-1.$

\mpar
5. The \textit{support} of a walk $\omega$ is the graph $\Spt\omega$ whose vertex set is $V_{\Spt\omega}=\{\omega_n\mid 0\leq n\leq N-1\}$ and whose edge set is $E_{\Spt\omega}=\{\,\{\omega_n, \omega_{n+1}\}\mid 0\leq n\leq N-1\}.$

\mpar
6. A graph $H$ \textit{supports} a walk $\omega$ provided $\Spt\omega$ is a subgraph of $H.$

\mpar
7. A graph is a \index{cycle}\textit{cycle} provided it is the support of a circuit. 
\mpar

8. A graph is a $\textit{simple cycle}$ provided it is the support of a simple circuit.

\mpar
9. The \textit{line integral} of $X$ along a walk $\omega$ is the sum,
$$
\omega\negthinspace\cdot\negthinspace X=\sum_{n=1}^{N(\omega)}X(\omega_{n-1}\omega_n).
$$

10. The \textit{circulation} of $X$ around a circuit $\omega$ is the line integral $\omega\negthinspace\cdot\negthinspace X.$

\mpar
11. A vector field is said to be \textit{circulation-free} provided $\omega\negthinspace\cdot\negthinspace X=0$ for every simple circuit. The space of circulation-free vector fields is denoted $\mathcal{Z}(G).$

\mpar
12. A vector field is said to be \textit{harmonic} provided it is divergence-free and circulation-free. The space of all harmonic vector fields is denoted $\mathcal{H}(G)$

\end{definition}

\begin{remark}
1. We emphasize the disctiction between circuits and cycles. A cycle is static - it is a graph. A circuit is dynamic - it is an ordered sequence of vertices of a cycle. In general, a given cycle is the support of many circuits each having different starting points and orderings.  A circuit may have repeated vertices but it has no repeated edges. A simple cycle is has neither repeated vertices nor repeated edges and therefore is isomorphic to $C_n$ for some $n\geq 3.$ It is the support of $2n$ distinct simple circuits, since circuits have two possible orderings for each possible starting point.

\mpar
2. We prefer to use the direct definition of line integral of a vector field along a walk, but it's interesting to note that for trails there is an equivalent definition that is analogous to line integrals along smooth curves in $\mathbb{R}^n.$ Observe that the support of a trail $\omega$ is a subgraph of $G$ and $\omega$ determines a unique ordering of the vertices of $\text{spt}(\omega).$ While a vertex may be repeated in a trail, its edges are not. This means each edge of $\text{spt}(\omega)$ is traversed exactly once by $\omega,$ a fact that we use to define the tangent vector field $t_{\omega}$ of the trail. Specifically, if $\omega_k=i$ and $\omega_{k+1}=j$ are consecutive vertices along the trail then $ij$ is a vertex of the tangent graph of $\text{spt}(\omega)$ and the tangent vector field is defined by the rule $t_{\omega}(ij)=1$ and $t_{\omega}(u)=0$ if $\pi(u)=i$ and $u\neq ij.$ In this notation we have,
$$
\omega\negthinspace\cdot\negthinspace X =\sum_{k=1}^{N(\omega)} t_{\omega}\negthinspace\cdot\negthinspace X(\omega_k),
$$
which justifies the line integral terminology.

\mpar
3. Evidently every walk defines a linear functional on $\mathcal{X}(G),$ that is to say a $1$-form in $\mathcal{X}(G)^*.$
\end{remark}

At this point we have three orthogonal decompositions,
$$
\mathcal{X}(G)=\mathcal{X}^s(G)\oplus\mathcal{X}^a(G)=\Image(\nabla)\oplus\Ker(\Div)= \mathcal{Z}(G)\oplus\mathcal{Z}(G)^{\perp}
$$
and it's helpful to explore some of the basic relations among them through examples.

\begin{example}
(Trees) If $G$ is a tree then every vector field is circulation-free since $G$ supports no cycles. Thus $\mathcal{Z}(G)=\mathcal{X}(G) = \mathcal{X}^s(G)\oplus\mathcal{X}^a(G).$ It's easy to see that $\mathcal{X}^a(G)=\Image(\nabla)$ simply by choosing a root vertex $i$ arbitrarily, defining $\phi(j)=\omega\cdot X$ where $\omega$ is the unique shortest walk in $G$ from $i$ to $j$, and observing that $\nabla\phi=X.$ It just as easy to see that $\mathcal{X}^s(G)=\Image(\nabla)^{\perp}=\Ker(\Div)=\mathcal{H}(G).$ It follows that $\mathcal{Z}(G)=\Image(\nabla)\oplus\mathcal{H}(G)$ and $|\Image(\nabla)|=|\mathcal{H}(G)|=|V_G|-1.$
\end{example}

\begin{example}
(Unicyclic Graphs) A unicyclic graph $G$ is a graph having cyclomatic number $|E_G|-|V_G|+1 =1.$ Thus $G$ contains a unique cycle and the complement of that cycle is a forest of trees rooted in the cycle. Let's begin by looking at $C_n,$ the cycle with with $n$ edges joining $n$ vertices labelled $1,2, \dots, n$. Clearly, $|\mathcal{X}(G)|=2n$ and $X\in \mathcal{Z}(G)$ if and only if,
$$
X(12)+X(23)+\cdots X(n1)= X(1n)+X(n(n-1))+\cdots X(21)=0.
$$
We have $|\mathcal{Z}(G)|=2n-2$ and $|\mathcal{Z}(G)^{\perp}|=2$ since the two equations above are linearly independent. Adding and subtracting them we find,
$$
sX(12)+sX(23)+\cdots sX(n1)= aX(12)+aX(23)+\cdots aX(n1)=0,
$$
hence $|\mathcal{Z}(G)\cap\mathcal{X}^s(G)|= |\mathcal{Z}(G)\cap\mathcal{X}^a(G)|=n-1.$ Thus, there is a one dimensional subspace of antisymmetric, non-circulation-free vector fields and a one dimensional subspace of symmetric, non-circulation-free vector fields. By trial and error one finds the vector field with coefficients $Y^a(i(i+1)) =1,\, Y^a((i+1)i)=-1$ generates the former subspace and the constant vector field $Y^s(u)=1$ generates the latter. 

\mpar
Let's show that $Y^a$ and $Y^s$ span $\mathcal{Z}^\perp.$ Suppose $X\in\mathcal{Z}(G)$ and observe that,
\begin{align*}
\langle X, Y^a\rangle_{\mathcal{X}(G)} & =\sum_{u\in tG}X(u)Y^a(u)\\
&= g\,\sum_{i=1}^{n-1}X(i(i+1)) + g\, X(n1) - g\,\sum_{i=1}^{n-1} X((i-1)i) -g\, X(1n)\\
& = g\,(\omega\negthinspace\cdot\negthinspace X - \overline{\omega}\negthinspace\cdot\negthinspace X),
\end{align*}
where $\omega = 1 2 3\cdots n 1$ is the natural circuit on $G$ and $\overline{\omega}$ is the reverse circuit. Since $X$ is circulation-free, both terms above vanish hence $Y^a$ is orthogonal to $\mathcal{Z}(G).$ A similar argument shows $Y^s$ is also orthogonal to $X,$ so $Y^a, Y^s\in\mathcal{Z}(G)^\perp.$ But $Y^a$ and $Y^s$ are linearly independent so they span $\mathcal{Z}(G)^\perp.$ Observe that, 
$$
\Image(\nabla)\subset\mathcal{Z}(G)\cap\mathcal{X}^a(G)
$$ 
and both spaces have dimension $n-1$ so they coincide. Also observe that,
$$
\mathcal{Z}(G)\cap\mathcal{X}(G)^s\subset\mathcal{H}(G),
$$ 
since all symmetric vector fields have zero divergence, hence $|\mathcal{H}(G)|\geq n-1.$

\mpar
Thus, we have $|\Image(\nabla)|=n-1,\, |\mathcal{Z}(G)^\perp|=2,\,\,\text{and}\,\, |\mathcal{H}(G)|\geq n-1.$ But, 
$$
|\Image(\nabla)|+|\mathcal{Z}(C)^\perp|+|\mathcal{H}(G)|\leq |\mathcal{X}(G)| =2n,
$$
and therefore $|\mathcal{H}(G)|=n-1.$ 

\mpar
Now suppose $G$ is a forest of trees rooted in a cycle $C_G.$ The edges of these trees play no role in determining whether a vector field is circulation-free so we still have $|\mathcal{Z}(G)^{\perp}|=2$ and 
$$
|\mathcal{Z}(G)|=2|E_G|-2= 2|E_G|-2(|E_G|-|V_G|+1)=2(|V_G|-1).
$$
A moment's thought reveals the extention of $X\in\mathcal{Z}(C_G)^\perp$ defined by setting the coefficients of $X$ equal to zero on the directed edges of the pendant trees, is orthogonal to circulation-free vector fields on $G.$ This implies the extention of $\mathcal{Z}(C_G)^\perp$ equals $\mathcal{Z}(G)^\perp$ because they have the same dimension. 

\mpar
Similarly, the extension of $X\in\mathcal{H}(C_G)$ is a harmonic vector field in $\mathcal{H}(G)$ and it remains harmonic if the coefficients of the directed edges of the pendant trees are non-zero and symmetric. Thus, $|\mathcal{H}(G)|\geq |V_G|-1.$ But $|\Image(\nabla)|=|V_G|-1$ so $|\mathcal{H}(G)|= |V_G|-1,$ as well. Thus, $\mathcal{X}(G)=\Image(\nabla)\oplus\mathcal{Z}(G)^{\perp}\oplus\mathcal{H}(G),$
which completes the unicyclic case.
\end{example} 

It follows from the definitions that for general graphs, $\Image(\nabla)\subset\mathcal{Z}(G)\cap\mathcal{X}^a(G)$ and $\mathcal{H}(G)\supset\mathcal{Z}(G)\cap\mathcal{X}^s(G).$ We used this fact in the previous example. Equality holds for trees and unicyclic graphs but the next example shows this is not always the case.

\begin{example}
Let $G$ be a rectangle having edges $\{1,2,3,4\}$  with an added edge between a pair of diagonal vertices say, $1$ and $3$. (See Figure 3.) Note that $G$ has cyclomatic number $|E_G|- |V_G| +1 = 2$ and $|\mathcal{X}(G)|=10.$ Circulation-free fields are solutions of six simultaneous equations in ten variables,
\begin{align*}
X(12) +X(23)+X(34)+X(41) &=0\\
X(14) +X(43)+X(32)+X(21) &=0\\
X(12)+X(23)+X(31) & = 0\\
X(13)+X(32)+X(21) & =0\\ 
X(14)+X(43)+X(31) & = 0\\
X(13)+X(34)+X(41) & =0.
\end{align*}
The system has rank four because there are two relations between the circulations around the the outer square and the inner triangles. Specifically, 
\begin{align*}
X(12)+X(23) & =-X(31) = - (X(34)+X(41))\\
X(32)+X(21) & =-X(13) = -(X(14)+X(43)),
\end{align*}
hence $|\mathcal{Z}(G)^{\perp}|=4$ and $|\mathcal{Z}(G)|=6.$

\begin{figure}[h]
\begin{tikzpicture} 
\draw[fill=black] (0,0) circle (2pt);
\draw[fill=black] (0,2) circle (2pt);
\draw[fill=black] (2,0) circle (2pt);
\draw[fill=black] (2,2) circle (2pt);

\node at (0,-0.3) {1};
\node at (0, 2.3) {4};
\node at (2,-0.3) {2};
\node at (2,2.3) {3};

\draw[thick] (0,0) -- (2,0);
\draw[thick] (2,0) -- (2,2) -- (0,2) -- (0,0);
\draw[thick] (0,0) -- (2,2);

\node at (1,-0.3) {a};
\node at (-0.3, 1) {d};
\node at (1,2.3) {c};
\node at (2.3,1) {b};
\node at (1.3,1) {e};

\draw[fill=black] (5,0) circle (2pt);
\draw[fill=black] (5,2) circle (2pt);
\draw[fill=black] (7,0) circle (2pt);
\draw[fill=black] (7,2) circle (2pt);

\node at (5,-0.3) {1};
\node at (5, 2.3) {4};
\node at (7,-0.3) {2};
\node at (7,2.3) {3};

\draw[thick] (5,0) -- (7,0);
\draw[thick] (5,0) -- (5,2) -- (7,2) -- (7,0);
\draw[thick] (5,0) -- (7,2);

\draw[thick,->] (5,0) -- (6,0);
\draw[thick,->] (7,0) -- (7,1);
\draw[thick,->] (7,2) -- (6,2);
\draw[thick,->] (5,2) -- (5,1);
\draw[thick,->] (7,2) -- (6,1);

\node at (6,-0.4) {f};
\node at (4.6, 1) {-f};
\node at (6,2.4) {-f};
\node at (7.4,1) {f};
\node at (6.2,0.8) {-2f};
\end{tikzpicture}
\caption{(\textit{Left}) The graph $G$ with edges labelled by the coefficients of a generic even vector field. (\textit{Right}) The graph $G$ with edges labelled by the coeffieicents of a specific antisymmetric vector field.} 
\end{figure}
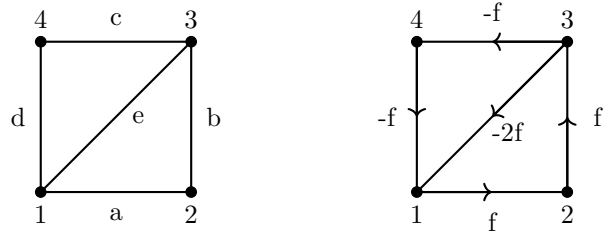

\mpar
Let's look for solutions by writing the circulation equations for even vector fields in the form $a+b+c+d = a+b+e = c+d+e=0.$ By inspection the solutions are $a=-b, c=-d, e=0,$ which yields a two dimensional space of harmonic vector fields in $\mathcal{Z}(G)$. On the other hand, there is a three dimensional space of gradient fields in $\mathcal{Z}(G)$ so there remains one degree of freedom yet to be identified in the space of circulation-free fields. It can't correspond to a symmetric field or a gradient field so it makes sense to search for a divergence-free, anti-symmetric field, say $Y$. By trial and error one finds,
\begin{align*}
f & =Y(12)=Y(23)\\
-f & = Y(34)=Y(41)\\
-2f & = Y(31).
\end{align*}
(See Figure 3). By inspection $Y$ is  circulation-free and divergence-free and therefore harmonic. Thus, $\mathcal{X}(G)=\Image(\nabla)\oplus\mathcal{Z}(G)^{\perp}\oplus\mathcal{H}(G)$ where,
$$
|\Image(\nabla)| =|\mathcal{H}(G)|= 3 = |V_G|-1\,\,\, \text{and}\,\,\, |\mathcal{Z}(G)^{\perp}|=4 = 2(|E_G|-|V_G|-1).
$$
\end{example}

The next result examines the role of parity of vector fields in more detail. Item 1 is well known but item 2 is new.

\begin{proposition}
1. The sequence,
$$
0\xrightarrow{}\mathring{C}(G)\xrightarrow{\nabla}\mathcal{X}(G)\xrightarrow{\,s\,}\mathcal{X}(G)\xrightarrow{\Div}\mathring{C}(G)\xrightarrow{} 0
$$
is exact and the homology groups $\mathcal{X}^a(G)/\Image(\nabla)\cong\Ker(\Div)/\mathcal{X}^s(G)$ have dimension $ |E_G|-|V_G|+1.$

\mpar
2. Let $\mathcal{Z}^s(G), \mathcal{Z}^a(G)$ and $\mathcal{H}^s(G), \mathcal{H}^a(G)$ be the images of the parity projections restricted to circulation-free and harmonic vector fields, respectively. We have, 
$$
\mathcal{Z}(G)=\mathcal{Z}^s(G)\oplus\mathcal{Z}^a(G) \,\,\text{and}\,\, \mathcal{H}(G)=\mathcal{H}^s(G)\oplus\mathcal{H}^a(G).
$$
\end{proposition}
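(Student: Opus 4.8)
The plan is to read the displayed sequence as a cochain complex: I would first check that it \emph{is} a complex and is exact at its two ends, then locate all of the homology in the two middle slots and compute it by a dimension count, identifying the common dimension with the cyclomatic number $|E_G|-|V_G|+1$. For Part 2 the decisive observation is that both $\mathcal{Z}(G)$ and $\mathcal{H}(G)$ are invariant under the involution $X\mapsto\overline{X}$, after which the parity decompositions are automatic.

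For Part 1, I would record that $s\circ\nabla=0$, since $d\phi(\overline{u})=-d\phi(u)$ gives $\overline{\nabla\phi}=-\nabla\phi$, so $\nabla\phi\in\mathcal{X}^a(G)=\Ker(s)$; and that $\Div\circ s=0$, since symmetric fields are divergence-free (as observed in the proof of the gradient--divergence proposition and again in the first remark following the divergence theorem). Exactness at the left end is the injectivity of $\nabla$ on $\mathring{C}(G)$ and at the right end is $\Image(\Div)=\mathring{C}(G)$, both coming from the lemma computing the kernels and images of $\nabla,\Div,\Delta$. Hence the only homology sits at the two middle copies of $\mathcal{X}(G)$, namely $\mathcal{X}^a(G)/\Image(\nabla)$ and $\Ker(\Div)/\mathcal{X}^s(G)$. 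Because the fixed-point-free involution $\sigma$ pairs the two directed edges lying over each edge of $G$, we get $\dim\mathcal{X}^s(G)=\dim\mathcal{X}^a(G)=|E_G|$; combined with $\dim\Image(\nabla)=|V_G|-1$ and, by rank--nullity, $\dim\Ker(\Div)=2|E_G|-(|V_G|-1)$, both quotients have dimension $|E_G|-|V_G|+1$. For the stated isomorphism I would use antisymmetrization: the assignment $X\mapsto aX+\Image(\nabla)$ carries $\Ker(\Div)$ into $\mathcal{X}^a(G)/\Image(\nabla)$ and annihilates $\mathcal{X}^s(G)$, so it descends to $\Ker(\Div)/\mathcal{X}^s(G)$; and if $X\in\Ker(\Div)$ satisfies $aX=\nabla\phi$, then $aX=X-sX\in\Ker(\Div)$ forces $\Delta\phi=0$, whence $\phi$ is constant and $aX=0$. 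Thus the induced map is injective, and by equality of dimensions it is an isomorphism.

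For Part 2 the engine is the pair of identities $\overline{\omega}\negthinspace\cdot\negthinspace X=\omega\negthinspace\cdot\negthinspace\overline{X}$ for every circuit $\omega$ and $\Div\overline{X}=-\Div X$, each obtained by unwinding the definitions of line integral and divergence using $\overline{X}(u)=X(\overline{u})$. Since the reversal $\overline{\omega}$ of a simple circuit is again a simple circuit and reversal is a bijection on simple circuits, $X\in\mathcal{Z}(G)$ gives $\omega\negthinspace\cdot\negthinspace\overline{X}=\overline{\omega}\negthinspace\cdot\negthinspace X=0$ for every simple circuit $\omega$, so $\overline{X}\in\mathcal{Z}(G)$; thus $\mathcal{Z}(G)$ is stable under $X\mapsto\overline{X}$. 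The identity $\Div\overline{X}=-\Div X$ shows $\Ker(\Div)$ is likewise stable, and intersecting it with the stable subspace $\mathcal{Z}(G)$ shows $\mathcal{H}(G)=\mathcal{Z}(G)\cap\Ker(\Div)$ is stable as well. Finally, on any subspace stable under $X\mapsto\overline{X}$ the orthogonal projections $s$ and $a$ restrict to orthogonal projections summing to the identity, which yields the orthogonal parity decompositions $\mathcal{Z}(G)=\mathcal{Z}^s(G)\oplus\mathcal{Z}^a(G)$ and $\mathcal{H}(G)=\mathcal{H}^s(G)\oplus\mathcal{H}^a(G)$.

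The step I expect to be the main obstacle is the homology computation in Part 1---not because any single calculation is hard, but because one must resist reading ``exact'' literally: the sequence is exact only at its two ends, and the substance of the statement is precisely the \emph{nonzero} middle homology. Reconciling $\dim\mathcal{X}^s(G)$, $\dim\mathcal{X}^a(G)$, $\dim\Image(\nabla)$, and $\dim\Ker(\Div)$ to the single number $|E_G|-|V_G|+1$, and verifying that antisymmetrization genuinely lands in and is injective on the correct quotients, is where the care lies; Part 2 is comparatively soft once the two reversal identities are in hand.
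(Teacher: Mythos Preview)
Your proposal is correct and follows essentially the same line as the paper. For Part~1 both you and the paper verify $\Image(\nabla)\subset\Ker(s)$ and $\Image(s)\subset\Ker(\Div)$ and then count dimensions; you add an explicit isomorphism $\Ker(\Div)/\mathcal{X}^s(G)\to\mathcal{X}^a(G)/\Image(\nabla)$ via antisymmetrization, while the paper is content to note that the two quotients have the same dimension. For Part~2 the paper argues directly, using the circuit $\omega$ and its reversal $\overline{\omega}$, that $\omega\negthinspace\cdot\negthinspace sX=\omega\negthinspace\cdot\negthinspace aX=0$ for every simple circuit whenever $X\in\mathcal{Z}(G)$; your route---showing $\mathcal{Z}(G)$ and $\Ker(\Div)$ are stable under $X\mapsto\overline{X}$ via the identities $\omega\negthinspace\cdot\negthinspace\overline{X}=\overline{\omega}\negthinspace\cdot\negthinspace X$ and $\Div\overline{X}=-\Div X$, then invoking the eigenspace splitting of an involution---is a slightly more conceptual repackaging of the same calculation. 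Your caveat about the word ``exact'' is well taken: the paper uses it loosely here to mean that the displayed sequence is a complex, as its own proof makes clear.
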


\begin{proof}
The sequence is exact since $\Ker(s)=\mathcal{X}^a(G)\supset\Image(\nabla)$ and $\Image(s)=\mathcal{X}^s(G)\subset\Ker(\Div).$ Since $|\mathcal{X}^s(G)|=|\mathcal{X}^a(G)| =|E_G|, |\Image(\nabla)|=|V_G|-1,$ and $|\Ker(\Div)|=2|E_G|-|V_G|+1$, the dimension of the homology groups equals the cyclomatic number of $G$.

\mpar
Let $\omega$ be a simple circuit and let $\overline{\omega}$ be the circuit with the same starting point as $\omega$ but opposite orientation. Then $\omega\negthinspace\cdot\negthickspace X=\omega\negthinspace\cdot\negthickspace sX+\omega\negthinspace\cdot\negthickspace aX=0$ for any circulation-free vector field, hence $\omega\negthinspace\cdot\negthickspace sX=-\omega\negthinspace\cdot\negthickspace aX.$ On the other hand, 
$$
-\omega\negthinspace\cdot\negthickspace aX=\overline{\omega}\negthinspace\cdot\negthinspace aX=-\overline{\omega}\negthinspace\cdot\negthinspace sX=-\omega\negthinspace\cdot\negthinspace sX  =\omega\negthinspace\cdot\negthinspace aX.
$$
It follows that $\omega\negthinspace\cdot\negthinspace aX = 0 = \omega\negthinspace\cdot\negthinspace sX.$ Therefore, $X\in \mathcal{H}(G)$ implies $sX$ and $aX$ are circulation-free. However, since $sX$ is symmetric, it is divergence-free and therefore harmonic, as is $aX=X-sX.$
\end{proof}

\begin{remark}
1. This result says the extent to which asymmetric vector fields fail to be gradients equals the extent to which divergence-free vector fields fail to be symmetric and that this failure is a topological property of the graph, measured by its cyclomatic number. 

\mpar
2. The decomposition  $\mathcal{H}(G)=\mathcal{H}^s(G)\oplus\mathcal{H}^a(G)$ suggests trying to identify a basis for harmonic vectors fields by searching for symmetric, circulation-free fields and finding the remaining basis elements among anti-symmetric, divergence-free vector fields, as in Example 3.5. 

\mpar
3. It would be interesting to find a formula for the dimensions of $\mathcal{H}^s(G)$ and $\mathcal{H}^a(G).$ 
\end{remark}

\begin{definition}
The \textit{curl} operator is the orthogonal projection of $\mathcal{X}(G)$ onto $\mathcal{Z}(G)^{\perp}.$
\end{definition}

\begin{proposition}
Let $X\in\mathcal{X}(G).$ Then,
\begin{align}
&\,\omega\cdot X = \omega\cdot\Curl X\,\,\text{for all simple circuits $\omega$}\\
&\Div\Curl X =0\\
&\Curl X \in \mathcal{H}(G)^{\perp}.
\end{align}
\end{proposition}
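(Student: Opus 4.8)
The plan is to deduce all three claims directly from the defining property of $\Curl$ — that it is the orthogonal projection of $\mathcal{X}(G)$ onto $\mathcal{Z}(G)^{\perp}$ — together with two elementary containments among subspaces already in play: $\Image(\nabla)\subseteq\mathcal{Z}(G)$ and $\mathcal{H}(G)\subseteq\mathcal{Z}(G)$.

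First I would record what the definition buys us: for every $X\in\mathcal{X}(G)$ one has the orthogonal splitting $X=\Curl X+(X-\Curl X)$ with $\Curl X\in\mathcal{Z}(G)^{\perp}$ and $X-\Curl X\in\mathcal{Z}(G)$. For the first identity, recall that each walk $\omega$ defines a linear functional $X\mapsto\omega\cdot X$ on $\mathcal{X}(G)$; when $\omega$ is a simple circuit, $\omega\cdot(X-\Curl X)=0$ since $X-\Curl X$ is circulation-free, so $\omega\cdot X=\omega\cdot\Curl X$ by linearity. That disposes of statement (i) with essentially no computation.

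For statement (ii) the only point needing a line of work is that gradient fields are circulation-free: for any circuit $\omega=\omega_0\omega_1\cdots\omega_N$ with $\omega_N=\omega_0$ we compute $\omega\cdot\nabla\phi=\sum_{n=1}^{N}d\phi(\omega_{n-1}\omega_n)=\sum_{n=1}^{N}\bigl(\phi(\omega_n)-\phi(\omega_{n-1})\bigr)=\phi(\omega_N)-\phi(\omega_0)=0$, the sum telescoping. Hence $\Image(\nabla)\subseteq\mathcal{Z}(G)$; passing to orthogonal complements and invoking the lemma that identifies $\Ker(\Div)$ with $\Image(\nabla)^{\perp}$ gives $\mathcal{Z}(G)^{\perp}\subseteq\Ker(\Div)$. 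Since $\Curl X\in\mathcal{Z}(G)^{\perp}$, we conclude $\Div\Curl X=0$.

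For statement (iii), the definition of $\mathcal{H}(G)$ as the space of vector fields that are simultaneously divergence-free and circulation-free gives $\mathcal{H}(G)\subseteq\mathcal{Z}(G)$ immediately, hence $\mathcal{Z}(G)^{\perp}\subseteq\mathcal{H}(G)^{\perp}$, and therefore $\Curl X\in\mathcal{Z}(G)^{\perp}\subseteq\mathcal{H}(G)^{\perp}$. I do not expect a genuine obstacle anywhere; the closest thing to one is the telescoping identity for $\omega\cdot\nabla\phi$, which in any case was already used informally in the discussion preceding Helmholtz's theorem.
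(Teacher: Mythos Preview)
Your proposal is correct and follows essentially the same approach as the paper's proof: both derive (ii) and (iii) from the containments $\Image(\nabla)\subseteq\mathcal{Z}(G)$ and $\mathcal{H}(G)\subseteq\mathcal{Z}(G)$ together with $\Ker(\Div)=\Image(\nabla)^{\perp}$, and both obtain (i) from the fact that $X-\Curl X\in\mathcal{Z}(G)$ since $\Curl$ is the orthogonal projection onto $\mathcal{Z}(G)^{\perp}$. The only difference is that you spell out the telescoping computation for $\omega\cdot\nabla\phi=0$, which the paper treats as already established, and the paper phrases the circulation-freeness of $X-\Curl X$ via the projection identity $\Curl(X-\Curl X)=\Curl X-\Curl^2 X=0$ rather than citing the orthogonal splitting directly.
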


\begin{proof}
Since $\Image(\nabla)\oplus\mathcal{H}(G)\subset\mathcal{Z}(G)$ it follows that $\Curl X$ is orthogonal to gradient fields and harmonic fields. Thus, $\Curl X\in\mathcal{H}(G)^{\perp}$ and $\Curl X\in\Image(\nabla)^{\perp}=\Ker(\Div)$ which proves items 2 and 3. Observe that $X-\Curl X$ is circulation-free since $\Curl(X-\Curl X)= \Curl X -\Curl^2 X = 0,$ which proves item 1.
\end{proof}

The Helmholtz-Hodge decomposition shows that these three properties actually define the curl. The argument turns on an elementary result in linear algebra whose proof we include for the reader's convenience.
 
 \begin{lemma}
 Let $A, B$ and $C$ be finite dimensional inner product spaces and let $f$ and $g$ be linear transformations such that $A \xrightarrow{f} B\xrightarrow{g} C$ and $g\circ f = 0.$ Then 
 $$
 B = \Image(f)\oplus\Image(g^*)\oplus\Ker(ff^*+g^*g),
$$
 is an orthogonal decomposition. Furthermore, $\Ker(g)/\Image(f)\cong \Ker(ff^*+g^*g) =\Ker(f^*)\cap\Ker(g).$
 \end{lemma}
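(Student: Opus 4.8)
The plan is to deduce everything from two elementary facts valid in finite-dimensional inner product spaces: $\Image(T)^{\perp}=\Ker(T^{*})$ for any linear map $T$, and $B=W\oplus W^{\perp}$ for any subspace $W\subseteq B$. The first step is to observe that the initial two summands are orthogonal: for $x\in A$ and $z\in C$ we have $\langle fx,g^{*}z\rangle_{B}=\langle gfx,z\rangle_{C}=0$ because $g\circ f=0$, so $\Image(f)\perp\Image(g^{*})$ and hence $\Image(f)+\Image(g^{*})$ is an internal orthogonal direct sum. Taking orthogonal complements and using $\Image(f)^{\perp}=\Ker(f^{*})$ and $\Image(g^{*})^{\perp}=\Ker(g)$, the complement of this sum is $\Ker(f^{*})\cap\Ker(g)$, so
$$
B=\Image(f)\oplus\Image(g^{*})\oplus\bigl(\Ker(f^{*})\cap\Ker(g)\bigr)
$$
is an orthogonal decomposition.

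The second step is to identify the third summand with $\Ker(ff^{*}+g^{*}g)$. The inclusion $\Ker(f^{*})\cap\Ker(g)\subseteq\Ker(ff^{*}+g^{*}g)$ is immediate; conversely, if $(ff^{*}+g^{*}g)v=0$ then $0=\langle(ff^{*}+g^{*}g)v,v\rangle=\lVert f^{*}v\rVert^{2}+\lVert gv\rVert^{2}$, which forces $f^{*}v=0$ and $gv=0$. This yields the asserted identity $\Ker(ff^{*}+g^{*}g)=\Ker(f^{*})\cap\Ker(g)$ and, combined with the first step, the displayed orthogonal decomposition of $B$.

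For the remaining claim, set $H=\Ker(f^{*})\cap\Ker(g)$. Since $g\circ f=0$ we have $\Image(f)\subseteq\Ker(g)$, and trivially $H\subseteq\Ker(g)$. Given $v\in\Ker(g)$, write $v=v_{1}+v_{2}+v_{3}$ according to the orthogonal decomposition of $B$; applying $g$ annihilates $v_{1}$ and $v_{3}$, so $gv_{2}=0$, while $v_{2}\in\Image(g^{*})=\Ker(g)^{\perp}$, forcing $v_{2}=0$. Hence $\Ker(g)=\Image(f)\oplus H$ orthogonally, and quotienting by $\Image(f)$ gives $\Ker(g)/\Image(f)\cong H$. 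I do not expect a genuine obstacle here; the one point requiring care is to treat the triple sum as an honest orthogonal decomposition rather than an arbitrary sum of subspaces, which is why I would first isolate $\Image(f)\perp\Image(g^{*})$ and then realize $H$ as the orthogonal complement of $\Image(f)+\Image(g^{*})$, rather than verifying pairwise transversality directly. Finite-dimensionality enters only through the existence of orthogonal complements and the identity $\Image(T)^{\perp}=\Ker(T^{*})$.
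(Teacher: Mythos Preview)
Your proof is correct and follows essentially the same approach as the paper's: you establish $\Image(f)\perp\Image(g^*)$ via the adjoint, identify the orthogonal complement as $\Ker(f^*)\cap\Ker(g)$, and characterize this as $\Ker(ff^*+g^*g)$ through the norm identity $\langle(ff^*+g^*g)v,v\rangle=\lVert f^*v\rVert^2+\lVert gv\rVert^2$. Your final step spelling out $\Ker(g)=\Image(f)\oplus H$ is slightly more explicit than the paper's one-line invocation of $\Ker(g)=\Image(g^*)^\perp$, but the argument is the same.
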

 
 \begin{proof}
 In self evident notation we have $\langle fa, g^*c\rangle_B = \langle g\circ fa, c\rangle_C=0,$ hence $\Image(f)$ and $\Image(g^*)$ are orthogonal subspaces of $B$. On the other hand,
$$
(\Image(f)\oplus\Image(g^*))^{\perp} = \Image(f)^{\perp}\cap\Image(g^*)^{\perp} = \Ker(f^*)\cap\Ker(g).
$$
 Now, observe that,
 $$
 \langle b, (f f^* + g^* g)b\rangle_B = \langle f^* b, f^* b\rangle_A +  \langle gb, gb\rangle_C = |f^*x|^2_A + |gx|^2_C  
 $$
 and therefore $b\in\Ker (f f^* + g^* g)$ if and only if $b\in\Ker (f^*)\cap\Ker (g).$ This establishes the direct sum decomposition of $B$ from which we conclude that $\Ker(g)/\Image(f)\cong\Image(g^*)^{\perp}/\Image(f)\cong\Ker(ff^*+g^*g).$
 \end{proof}
 
\begin{remark}
This lemma is useful because it identifies the homology group as the kernel of the self-adjoint operator $f f^* + g^* g,$ sometimes called the Hodge operator or Hodge laplacian in honor of W. V. D. Hodge.
\end{remark}

\begin{theorem}
(Helmholtz-Hodge Decomposition) The sequence,
$$
0\xrightarrow{} \mathring{C}(G)\xrightarrow{\nabla}\mathcal{X}(G)\xrightarrow{\Curl}\mathcal{X}(G)\xrightarrow{\Div}\mathring{C}(G)\xrightarrow{}0
$$
is exact and $\mathcal{X}(G)=\Image(\nabla)\oplus\Image(\Curl)\oplus\mathcal{H}(G)$ is an orthogonal decomposition. The summands have dimension,
$$
|\Image(\nabla)|=|\mathcal{H}(G)|= |V_G|-1
$$
and, 
$$
|\Image(\Curl)|=2(|E_G|-|V_G)| -1).
$$
\end{theorem}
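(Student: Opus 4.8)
The plan is to read everything off the Hodge-type lemma just proved, applied to the two-step complex $\mathring{C}(G)\xrightarrow{\nabla}\mathcal{X}(G)\xrightarrow{\Curl}\mathcal{X}(G)$, and then to extract the dimensions from what is already known about $\nabla$, $\Div$, and the space $\mathcal{Z}(G)^{\perp}$ onto which $\Curl$ projects. First I would record that this really is a complex: $\Curl\circ\nabla=0$ because a gradient has vanishing line integral around any closed walk, since $\omega\cdot\nabla\phi=\sum_{n}d\phi(\omega_{n-1}\omega_{n})=\phi(\omega_{N})-\phi(\omega_{0})=0$ for a circuit, so $\Image(\nabla)\subseteq\mathcal{Z}(G)=\Ker(\Curl)$; while $\Div\circ\Curl=0$ is the content of the preceding proposition. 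Together with the earlier lemma — which gives $\Ker(\nabla)$ equal to the constants (so $\nabla$ is injective on $\mathring{C}(G)$) and $\Image(\Div)=\mathring{C}(G)$ — this establishes the displayed sequence.

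Next I apply the preceding lemma with $f=\nabla$ and $g=\Curl$, whose composite we just saw vanishes. It yields
$$
\mathcal{X}(G)=\Image(\nabla)\oplus\Image(\Curl^{*})\oplus\Ker(\nabla\nabla^{*}+\Curl^{*}\Curl).
$$
Since $\nabla^{*}=\Div$, and since $\Curl$, being the orthogonal projection onto $\mathcal{Z}(G)^{\perp}$, is self-adjoint and idempotent, the middle summand is $\Image(\Curl)=\mathcal{Z}(G)^{\perp}$ and the last is $\Ker(\nabla\Div+\Curl)=\Ker(\Div)\cap\Ker(\Curl)=\Ker(\Div)\cap\mathcal{Z}(G)$, which is precisely $\mathcal{H}(G)$. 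This is the asserted orthogonal decomposition; taking orthogonal complements it also gives $\mathcal{Z}(G)=\Image(\nabla)\oplus\mathcal{H}(G)$.

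For the dimensions, $\nabla$ restricts to an isomorphism $\mathring{C}(G)\to\Image(\nabla)$, so $|\Image(\nabla)|=|V_{G}|-1$; since $|\mathcal{X}(G)|=2|E_{G}|$, everything reduces to computing $|\Image(\Curl)|=|\mathcal{Z}(G)^{\perp}|$. Here I would use that $\mathcal{Z}(G)^{\perp}$ is spanned by the vectors $\widehat{\omega}=\sum_{n}e_{\omega_{n-1}\omega_{n}}$ representing the circulation functionals $X\mapsto\omega\cdot X$; a cyclic rotation does not change $\widehat{\omega}$, so a simple cycle contributes exactly the two vectors $\widehat{\omega}$ and $\widehat{\overline{\omega}}$ of its two orientations. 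Splitting by the $\mathbb{Z}_{2}$-grading, this span is the orthogonal direct sum of the span of the antisymmetric oriented-cycle vectors $\widehat{\omega}-\widehat{\overline{\omega}}$ — the classical real cycle space of $G$, of dimension $|E_{G}|-|V_{G}|+1$ — and the span of the symmetric cycle-indicator vectors $\widehat{\omega}+\widehat{\overline{\omega}}$, which I would argue also has dimension $|E_{G}|-|V_{G}|+1$; this gives $|\Image(\Curl)|=2(|E_{G}|-|V_{G}|+1)$, and then $|\mathcal{H}(G)|=2|E_{G}|-(|V_{G}|-1)-2(|E_{G}|-|V_{G}|+1)=|V_{G}|-1$. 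An alternative is to compute $\mathcal{H}(G)$ directly via the parity splitting $\mathcal{H}(G)=\mathcal{H}^{s}(G)\oplus\mathcal{H}^{a}(G)$ of the earlier proposition: symmetric fields are automatically divergence-free, so $\mathcal{H}^{s}(G)=\mathcal{X}^{s}(G)\cap\mathcal{Z}(G)$, while an antisymmetric, divergence-free field is orthogonal to the real cycle space it lies in, hence can be circulation-free only if it vanishes, so $\mathcal{H}^{a}(G)=0$ and one is left counting $\mathcal{X}^{s}(G)\cap\mathcal{Z}(G)$.

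The entire difficulty sits in this last count. The decomposition is formal once the Hodge lemma is in hand, and $|\Image(\nabla)|$ is immediate; but pinning down $|\mathcal{Z}(G)^{\perp}|$, equivalently $|\mathcal{H}(G)|$, is genuinely combinatorial — it is the rank over $\mathbb{R}$ of a cycle/edge incidence array, not something adjointness produces — and the delicate point, which I expect to be the real obstacle, is to verify that the symmetric cycle-indicator vectors span no more than the $|E_{G}|-|V_{G}|+1$ dimensions of the $\mathbb{F}_{2}$-cycle space, i.e. that passing from $\mathbb{F}_{2}$ to $\mathbb{R}$ costs nothing here.
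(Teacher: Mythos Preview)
Your derivation of exactness and of the orthogonal decomposition is exactly the paper's: both apply the Hodge lemma with $f=\nabla,\ g=\Curl$, then use $\nabla^{*}=\Div$ and the self-adjointness and idempotence of $\Curl$ to identify the three summands as $\Image(\nabla)$, $\Image(\Curl)=\mathcal{Z}(G)^{\perp}$, and $\mathcal{H}(G)$.

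For the dimensions the paper does \emph{not} count cycle vectors directly; it inducts on the cyclomatic number, deleting a non-tree edge $\{i,j\}$ and arguing that restriction $\mathcal{Z}(G)\to\mathcal{Z}(G')$ together with an explicit extension $\mathcal{Z}(G')\to\mathcal{Z}(G)$ (prescribing $Y(ij),Y(ji)$ so as to kill the circulation around the fundamental cycle through $\{i,j\}$) shows $|\mathcal{Z}(G)|=|\mathcal{Z}(G')|$, hence $|\mathcal{Z}(G)^{\perp}|=|\mathcal{Z}(G')^{\perp}|+2$.

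Your direct route has a genuine gap precisely where you flag it, and it is fatal rather than merely delicate. The antisymmetric half of $\mathcal{Z}(G)^{\perp}$ is indeed the real cycle space, of dimension $|E_{G}|-|V_{G}|+1$; but the symmetric half need not match. In the paper's own Example~4.5 (the square $1234$ with diagonal $\{1,3\}$) the three simple cycles give symmetric indicators $(1,1,1,1,0)$, $(1,1,0,0,1)$, $(0,0,1,1,1)$ in edge-coordinates $(12,23,34,41,13)$, and these are linearly \emph{independent} over $\mathbb{R}$ --- the $\mathbb{F}_{2}$ relation ``square $=$ triangle $+$ triangle'' does not survive to $\mathbb{R}$. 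Thus $|\mathcal{Z}(G)^{\perp}|=2+3=5\neq 4$. Your alternative via $\mathcal{H}^{a}(G)=0$ is in fact correct (an antisymmetric circulation-free field is a gradient, and a divergence-free gradient vanishes), but it then yields $|\mathcal{H}(G)|=|\mathcal{Z}^{s}(G)|=2\neq |V_{G}|-1$ in this same example. So the obstacle you anticipated is real, and it blocks not just your argument but the stated dimension formula itself; the paper's inductive step has the matching weakness that its extension kills the circulation around \emph{one} simple cycle through $\{i,j\}$ but not, in general, around the others.
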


\begin{proof}
Observe that $\Image(\nabla)\subset\mathcal{Z}(G)=\Ker(\Curl)$ and $\Image(\Curl)\subset\Image(\nabla)^{\perp}=\Ker(\Div)$ hence the sequence is exact. Lemma 4.10 applied either to $f=\nabla, g=\Curl$ or $f=\Curl, g=\Div$ yields the decomposition, using the fact that $\Curl$ is self adjoint.

\mpar
We calculate the dimension of $\mathcal{H}(G)$ and $\Image(\Curl)=\mathcal{Z}(G)^{\perp}$ by induction on the cyclomatic number $\xi=|E_G|-V_G+1.$ The cases $\xi=0,1$ were established in Examples 4.3 and 4.4.

\mpar
So, suppose the result is true for all graphs with $\xi=n$ and let $G$ be a graph with $\xi=n+1.$ Let $T$ be a spanning tree of $G$ and consider the $n+1$ simple cycles labelled by the $n+1$ edges of $G$ not in $T$. Let $\{i,j\}$ be one such edge and let $G^{\prime}$ be the subgraph of $G$ obtained by deleting $\{ i,j \}$; specifically,
$$
V_{G'} = V_G\ \text{and}\ E_{G'} = E_G\setminus\{ i,j\}.
$$
Then $G'$ has cyclomatic number $n$ and so by the inductive hypothesis,
 $$
 |\mathcal{H}(G' )|=|V_{G'}| -1 \ \text{and}\,\, |\mathcal{Z}(G')^{\perp}| = 2n.
 $$
For any $X\in\mathcal{X}(G)$ let its restriction to $G'$ be denoted $X'\in\mathcal{X}(G'),$ where $X'(u)=X(u)$ for all $u\in V_{tG'}.$ Clearly, if $X\in \mathcal{Z}(G)$ then $X'\in\mathcal{Z}(G')$ because every cycle in $G^\prime$ is a cycle in $G.$ On the other hand, suppose $Y'\in\mathcal{Z}(G')$ and define its extension $Y\in\mathcal{Z}(G)$ by the following rule. Let $\omega$ be a simple circuit on the simple cycle of $G$ labelled by $\{ i,j \}$ and suppose, without loss of generality, that $\omega_0 = \omega_N = i$ and $\omega_1=j.$ Let $Y(u) = Y'(u)$ for all $u\in V_{tG'}$ and set,
$$ 
Y(ij)= -\sum_{n=1}^{N-1} Y'(\omega_n\omega_{n+1}) \,\, \text{and} \,\, Y(ji)= -\sum_{n=1}^{N-1} Y'(\omega_{n+1}\omega_n).
$$
Then $Y\in\mathcal{Z}(G)$ and the restriction of $Y$ to $G'$ is $Y'$. Therefore, $|\mathcal{Z}(G)| = |\mathcal{Z}(G')|$ and,
 \begin{align*}
 |\mathcal{Z}(G)^{\perp}| & = |\mathcal{X}(G)| - |\mathcal{Z}(G)|\\
 & =  |\mathcal{X}(G')| + 2 - |\mathcal{Z}(G')|\\ 
 & = |\mathcal{Z}(G')^{\perp}| + 2\\
 & = 2(n+1)\\
 & = 2(|E_G|-|V_G| +1).   
 \end{align*}

Finally, since $|V_G| = |V_{G'}|,$ it follows  that $|\mathcal{H}(G)| = |\mathcal{H}(G')|=|V_G| -1.$
\end{proof}

\begin{remark}
1. In three dimensions, $\Curl X =\nabla\times X$ is a local operator. Previous definitions of curl on a graph, like that in \cite{L} relative to the clique complex, or in \cite{BCEG} as taking the symmetric part of a vector field, are also local in an appropriate sense. In contrast, our version of $\Curl X$ is a non-local operator since it is defined in terms of the solutions of a set of homogeneous linear equations indexed by simple cycles of the graph. While this may be surprising because it differs so markedly from the classical case, it's not unusual in that the Helmholtz projections $p_{\nabla}$ and $1-p_{\nabla}$ are non-local operators. It would be interesting to find a formula for the $\Curl$ in terms of primitive operators, in the same sense that $p_{\nabla}=\nabla\circ\Delta^{-1}\circ\Div$ is a formula  for the projection of $\mathcal{X}(G)$ onto $\Image(\nabla).$ Note that this is equivalent to finding a formula for the orthogonal projection onto $\mathcal{H}(G).$

\mpar
2. We know $\Image(\nabla)\cong\mathcal{H}(G),$ since they have the same dimension. This begs the question whether there is a geometrically meaningful isomorphism between them. The answer is not obvious since $p_{\nabla}X=0$ if $X$ is harmonic. Moreover, gradient fields are antisymmetric but harmonic fields may have mixed parity and so this question is likely related to the previous question of a formula for the dimensions of $\mathcal{H}^s(G)$ and $\mathcal{H}^a(G).$ Similar remarks apply to whether there is a geometrically meaningful isomorphism between $\Image(\Curl)$ and $\mathcal{X}^a(G)/\Image(\nabla)\oplus\Ker(\Div)/\mathcal{X}^s(G).$ 

\mpar
3. With the curl operator in hand, we can formulate Maxwell's equations on a graph as,
$$
\tfrac{\partial}{\partial t} E_t=-\Curl B_t,\,\,\tfrac{\partial}{\partial t} B_t =-J+\Curl E_t,\,\,\Div E_t=\rho, \,\, \Div B_t=0,
$$
where $E$ is the electric field, $B$ is the magnetic field, $\rho$ is the electric charge density, and $J$ is the current density. This is a constrained system of linear ordinary differential equations. It would be interesting to know if this form of Maxwell's equations is just a mathematical curiosity or if it has genuine physical meaning; for example, if the graph represents an electromagnetic device like a network of waveguides or if it represents a coarse-grained description of a device as a system of lumped circuits. 
\end{remark}


\begin{thebibliography}{99}
\bibitem{A}
Douglas N. Arnold,
\emph{Finite Element Exterior Calculus,}
CBMS-NSF Regional Conference Series in Applied Mathematics 93, Society for Industrial And Applied Mathematics (2018)
\index{A}

\bibitem{AFW}
Douglas N. Arnold, Richard S. Falk, Ragnar Winther,
\emph{Finite Element Exterior Calculus: From Hodge Theory to Numerical Stability,}
Bulletin of the American Mathematical Society 47 (2) (2010)
\index{AFW}

\bibitem{BCEG}
E. Bendito, A Carmona, A. M. Encinas, J. M. Gesto
\emph{The Curl of A Weighted Network,}
Applicable Analysis and Discrete Mathematics 2 (2) (2008) 241-254
\index{BCEG}

\bibitem{C}
Keenan Crane
\emph{Discrete Differential Geometry: An Applied Introduction}
https://www.cs.cmu.edu/~kmcrane/Projects/DDG/paper.pdf (2023)
\index{GH}

\bibitem{DDT}
Fernando de Goes, Mathieu Desbrun, Yiting Tong,
\emph{Vector Field Processing on Triangle Meshes}
Course Notes ACM SIGGRAPH Asia (2015)
\index{DDT}

\bibitem{GP}
Leo J. Grady, Jonathan R. Polimeni,
\emph{Discrete Calculus, applied analysis on graphs for computational science}
Springer-Verlag London Limited (2010)
\index{LP}

\bibitem{GH}
Karl Guftason, Frank Harary,
\emph{The Curl of Graphs and Networks}
Mathematical Modelling 6 (1985) 145-155
\index{GH}

\bibitem{M}
Peter March,
\emph{Bochner's Identity on Graphs,}
https://doi.org/10.48550/arXiv.2411.09672
\index{March}

\bibitem{F}
R. Forman,
\emph{Bochner's Method for Cell Complexes and Combinatorial Ricci Curvature,}
Discrete \& Computational Geometry 29 (3) (2002) 323-374
\index{F}

\bibitem{L}
Lek-heng Lim,
\emph{Hodge Laplacians on Graphs,}
SIAM Review 62(3) (2020) 685-715
\index{L}

\bibitem{LLY}
Yong Lin, Linyuan Lu, Shing-Tung Yau,
\emph{Ricci Curvature of Graphs,}
Tohoku Mathematics Journal 63 (2011) 605-627
\index{LLY}

\bibitem{O}
Y. Ollivier,
\emph{Ricci curvature of Markov chains on metric spaces}
Journal of Functional Analysis 256 (2009) 810-864
\index{O}

\bibitem{S}
Alexander Strang,
\emph{Applications of the Helmholtz-Hodge Decomposition to Networks and Random Processes,}
PhD Dissertation, Case Western Reserve University, \\
https://case.edu/math/thomas/Strang-Alexander-2020-PhD-thesis-final.pdf
\index{SSWJ}

\bibitem{SSWJ}
Emil Saucan, Areejit Samal, Melanie Weber, Jurgen Jost,
\emph{Discrete Curvatures and Network Analysis,}
MATCH Communications in Mathematical and in Computer Chemistry
\index{SSWJ}

\end{thebibliography}
\end{document}